\documentclass[preprint,12pt]{elsarticle}




\usepackage{amssymb}
\usepackage{amsmath}

\usepackage{amsthm}
\newtheorem{theorem}{Theorem}
\newtheorem{definition}{Definition}
\newtheorem{prop}{Proposition}

\newtheorem{conj}{Conjecture}
\newtheorem{corollary}{Corollary}
\newtheorem{lemma}{Lemma}
\newtheorem*{prop: binomial difference bound}{Proposition \ref{binomial difference bound}}
\newtheorem*{thm: box fragmentation conj answer arbitrary dim}{Theorem \ref{box fragmentation conj answer arbitrary dim}}
\newtheorem*{conj: box fragmentation conjecture}{Conjecture \ref{box fragmentation conjecture}}

\usepackage{hyperref}
\usepackage{bbm}
\usepackage[margin=1in]{geometry}

\newcommand{\R}{\ensuremath{\mathbb{R}}}
\newcommand{\Q}{\mathbb{Q}}
\newcommand{\Z}{\ensuremath{\mathbb{Z}}}


\journal{Journal of Number Theory}

\begin{document}
\begin{frontmatter}


\title{Benford Behavior Resulting From Stick and Box Fragmentation Processes}

\author[inst1]{Bruce Fang\corref{cor1}}
\cortext[cor1]{Corresponding author.}
\ead{fangbaojun2002@gmail.com}

\author[inst1]{Steven J. Miller}
\ead{sjm1@williams.edu or steven.miller.mc96@aya.yale.edu}

\affiliation[inst1]{organization={Department of Mathemtics},
             addressline={Williams College},
             city={Williamstown},
             postcode={01267},
             state={MA},           country={USA}}



\begin{abstract}
Benford’s law is the statement that in many real-world data set, the probability of having digit $d$ in base $B$ as the first digit is $\log_{B}\left((d+1)/d\right)$ for all $1\leq d\leq B$. We sometimes refer to this as weak Benford behavior, and we say that a data set satisfies strong Benford behavior in base $B$ if the probability of having significand at most $s$ is $\log_{B}(s)$ for all $s\in[1,B)$. We examine Benford behaviors in two different probabilistic model: stick and box fragmentation models. Building on the joint work of Becker et al. \cite{dependent} on the single proportion stick fragmentation model, we employ combinatorial identities on multinomial coefficients to reduce the multi-proportion stick fragmentation model to the single proportion model. We then provide a necessary and sufficient condition for the lengths of the stick fragments to converge to strong Benford behavior along with a quantification of the discrepancy from uniform distribution on $[0,1]$ in terms of irrationality exponent. Then we answer a conjecture posed by Betti et al. \cite{box fragmentation} on the high dimensional box fragmentation model. Using tools from Fourier analysis and order statistics, we prove that under some conditions, faces of any arbitrary dimension of the box have total volume converging to strong Benford's behavior.
\end{abstract}

\begin{keyword}
Benford's law \sep high-dimensional fragmentation \sep equidistribution mod 1 \sep irrationality exponent \sep multinomial identities \sep order statistics


\MSC[2008] 60A10 \sep 11K06 (primary) \sep 60E10 (secondary)

\end{keyword}

\end{frontmatter}



\tableofcontents

\section{Introduction}
At the end of the 19\textsuperscript{th} century, astronomer and mathematician Simon Newcomb \cite{Ne} noticed an unusual pattern in the logarithmic tables he used at work. The early pages of the books were far more worn than the later ones, suggesting that numbers starting with smaller digits were consulted more often. Specifically, Newcomb found that $1$ appeared as the leading digit for about $30\%$ of the time, $2$ for about $17\%$, with the frequency decreasing for larger digits. Although he formulated a mathematical law for this curious phenomenon, his discovery initially went largely overlooked.

It took over 50 years after Newcomb's discovery for physicist Frank Benford to make the same observation as Newcomb. He formulated this law as follows. Although the law was originally only stated for base 10, we state the generalized form for any base $B\geq 2$.

\begin{definition}\cite[Page 554]{Ben} A data set is said to satisfy \textbf{Benford's law for the leading digit} in base $B$ if the frequency $F_d$ of leading digit $d$ is
\begin{align}
F_d \ = \ \log_{B}\left(\frac{d+1}{d}\right).
\end{align}
\end{definition}

Nowadays, Benford's Law is used in detecting many different forms of fraud, and its prevalence in the world fascinates not only mathematicians, but many other scientists as well (to learn more about Benford's law and its many applications, we recommend~\cite{BeHi, Mil1, Ni} to name a few). 

In 1986, Lemons \cite{Lemons} proposed applying Benford's law to the analysis of the partitioning of a conserved quantity. Motivated in part by potential applications to nuclear fragmentation, both mathematicians and physicists have since investigated the Benford behavior of a range of fragmentation models. One such model is \textit{stick fragmentation}.
In a 1-dimensional stick fragmentation model, we begin with a stick of length $L$. Draw $p_1$ from a probability distribution on $(0,1)$. This divides the stick into two sub-sticks of lengths $p_1L$ and $(1-p_1)L$. For each resulting sub-stick, draw two probabilities $p_2$ and $p_3$ from the same distribution and fragment them accordingly. This procedure is repeated for a total of $N$ stages, with every sub-stick produced at a given stage being fragmented at the next stage using newly drawn probabilities. After $N$ stages, the process yields $2^N$ sticks. A central question is whether the distribution of stick lengths produced by this process converges to Benford's behavior.

\subsection{Previous Work on Fragmentation}

A key definition in formulating a more general version of Benford behavior is the notion of the significand of a real number, i.e., its leading digits in scientific notation.

\begin{definition}
For any $x>0$, we can uniquely write
\begin{align}
x \ = \ S_{B}(x)\cdot B^{k_B(x)},
\end{align}
where $S_B(x)\in [1,B)$ and $k_{B}(x)\in \Z$. Equivalently, $k_{B}(x)=\lfloor \log_B(x)\rfloor$ and $S_{B}(x)=x/B^{k_B(x)}$. We call $S_B(x)$ the \textbf{significand} of $x$. 
\end{definition}

We now define a more general version of Benford behavior that captures processes that only display Benford behaviors in the limit.

\begin{definition}
A sequence of random variables $\{X^{(N)}\}_{N=1}^\infty$ is said to converge to \textbf{strong Benford behavior} in base $B$ if 
\begin{equation}
\lim_{N\rightarrow\infty}\mathbb{P}(S_B(X^{(N)}) \leq s) \ = \ \log_B(s)
\end{equation}
for all $s \in [1,B)$..
\end{definition}

An equivalent formulation to the above is the \textbf{Uniform Distribution Characterization}, which is especially suited for investigation of products of random variables; see \cite{Dia} for a proof.
\begin{prop}[Uniform Distribution Characterization]\cite{Dia}\label{uniform distribution characterization prop}
A sequence of random variables $\{X^{(N)}\}_{N=1}^\infty$ converges to strong Benford behavior in base $B$ if and only if
\begin{align}\label{uniform distribution characterization condition}
\lim_{N\rightarrow\infty}\mathbb{P}\left(\log_B(X^{(N)})\textup{ 
 mod  } 1\leq t\right) \ = \ t,
\end{align}
for all $t\in [0,1)$. If \eqref{uniform distribution characterization condition} is satisfied, then we say that $\{\log_B(X^{(N)})\}_{N=1}^\infty$ \textbf{converges to being equidistributed mod 1}.
\end{prop}

We are now ready to review some previous results on stick fragmentation models and introduce the two models of our paper. The first model is an extension of the model introduced by Becker et al. called the fixed single proportion stick fragmentation model \cite{dependent}. Their main result on this model is a necessary and sufficient condition for the distribution of stick lengths to converge to strong Benford behavior as well as a quantification of the error term in terms of irrationality exponent, which we define now.

\begin{definition}\cite{power saving}
Suppose that $x$ is a real number. The \textbf{irrationality exponent} $\mu_x$ of $x$ is the supremum of the set of $\mu$ such that $0<|x-p/q|<1/q^\mu$ is satisfied by an infinite number of coprime integer pairs $(p,q)$ with $q>0$. If such a set does not exist, then we say $x$ has irrationality exponent $\infty$.
\end{definition}

\begin{definition}\cite{power saving}
For a finite sequence $\{x_i\}_{i=1}^N$, denote by $A([a,b),N)$ the number of $x_i$'s such that $x_i \textup{ mod 1} \in [a,b)$ for $0\leq a<b\leq 1$. Then we call the number
\begin{align}
D_N \ := \ \sup_{0\leq a< b\leq 1}\left|\frac{A([a,b),N)}{N}-(b-a)\right|
\end{align}
the \textbf{discrepancy} of the sequence. It measures how close a sequence is from being uniform mod 1.
\end{definition}

Now we are ready to state a main theorem of \cite{dependent}.

\begin{theorem}\label{single proportion thm}\cite{dependent}
Fix a proportion $p \in (0,1)$ and a stick of length $L$. At Stage 1, cut the stick at proportion $p$ to create two sub-sticks of lengths $Lp$ and $L(1-p)$. At  Stage 2, cut each of these two sub-sticks into two smaller sub-sticks at the same proportion $p$. Repeat this procedure for a total of $N$ stages, generating $2^N$ sticks with $N+1$ distinct lengths (assuming $p \neq 1 /2 $) given by
\begin{align} 
x_1 &\ = \ Lp^N \nonumber\\
x_2 & \ = \ Lp^{N-1}(1-p)\nonumber\\
x_3 & \ = \ Lp^{N-2}(1-p)^2\nonumber\\
&\quad \vdots \nonumber\\
x_{N} &\ = \ Lp(1-p)^{N-1}\nonumber\\
x_{N+1} &\ = \   L(1-p)^N,
\end{align}
where the frequency of $x_n$ is $\binom{N}{n}/2^N$. Consider the sequence of random lengths $\{X^{(N)}\}_{N=1}^\infty$, where $X^{(N)}$ is the length of a stick picked uniformly at random from the $2^N$ sticks obtained at stage $N$.  Choose $y$ so that $B^y = (1-p)/p$, which is the ratio of adjacent lengths (i.e., $x_{i+1}/x_i$). Then $\{X^{(N)}\}_{N=1}^\infty$ converges to strong Benford behavior base in $B$ if and only if $y\not\in\Q$. If $y$ has a finite irrationality exponent, then for sufficiently large $N$, the discrepancy of the sequence of the $2^N$ stick lengths (not distinct) can be quantified in terms of that exponent, and there is a power savings.
\end{theorem}

Theorem \ref{single proportion thm} suggests the possibility of generalizing the fixed single proportion stick fragmentation model to a fixed multi-proportion stick fragmentation model, where at each stage, instead of cutting a stick into 2 sub-sticks according to a fixed proportion, we cut a stick into an arbitrary but fixed $m$ number of sub-sticks according to $m-1$ fixed proportions. Again, we are interested in the conditions under which the distribution of stick lengths produced in this model converges to strong Benford's behavior. A main result of this paper provides a definitive answer to this question, as follows:

\begin{theorem}\label{multiproportion thm}
For a fixed integer $m>2$, choose $p_1, p_2, \dots, p_{m-1}\in (0,1)$ such that $p_1+p_2+\dots+p_{m-1}<1$. Set $p_m:=1-(p_1+p_2+\dots+p_{m-1})$. Now, consider a stick of length $L$. At each stage, we cut every remaining stick into $m$ sub-sticks according to proportions $p_1,p_2,\dots,p_{m-1}$. After stage $N$, we have $m^N$ sticks in total, of lengths
\begin{align}
A^{(N)}_{k_1,k_2,\dots,k_m} \ = \ Lp_1^{k_1}p_2^{k_2}\cdots p_m^{k_m},
\end{align}
for all $0\leq k_1,k_2,\dots, k_m\leq N$ such that $k_1+k_2+\cdots+k_m=N$. Consider the sequence of random lengths $\{X^{(N)}\}_{N=1}^\infty$, where $X^{(N)}$ is the length of a stick picked uniformly at random from the $m^N$ sticks obtained at stage $N$. Let $y_i=\log_{B}(p_i/p_{i+1})$ for all $1\leq i\leq m-1$. Then $\{X^{(N)}\}_{N=1}^\infty$ converges to strong Benford behavior base in $B$ if and only if $y_i\not\in\Q$ for some $1\leq i\leq m-1$. Let $\kappa_0$ be the least irrationality exponent among all the irrational $y_i$'s. Then for sufficiently large $N$, the discrepancy of the sequence of the $m^N$ stick lengths (not distinct) is $O(N^{\delta(-1/(\kappa_0-1)+\epsilon')})$ for some $\delta>0$ and for every $\epsilon'>0$.
\end{theorem}

In \cite{dependent}, Becker et al. also looked into other variants of the stick fragmentation model, in particular, the Unrestricted 1-Dimension Decomposition Model, where the proportion cuts are independently drawn from the same unspecified probability density $f$. They were able to establish a sufficient condition in terms of the Mellin transforms of the probability density function $f$ for strong Benford behavior to emerge from the stick length distribution. This direction was generalized by Durmi\'{c} and Miller to a higher dimensional model called the box fragmentation model in \cite{DM}, and strong Benford behavior was established for the volume of a fragmented box of arbitrary dimension $m$ under a similar condition on the Mellin transforms of $f$. In \cite{box fragmentation}, Betti et al. further generalized the work of Durmi\'{c} and Miller on box fragmentation model and considered the volume of the lower $d$-dimensional faces of a fragmented box of arbitrary dimension $m$, where $d\leq m$. To state their results, we start with some definitions.
\begin{definition}
A set $\mathfrak{B}\subset\R^m$ is said to be an $m$-\textbf{dimensional box} if it is a set of the form $[a_1,b_1]\times\cdots\times [a_m,b_m]\subset\R^m$, where $a_i<b_i$
 are real numbers.    
\end{definition}
\begin{definition}
A \textbf{linear-fragmentation process} is a sequence of random variables $\mathfrak{B}_0,\mathfrak{B}_1,$ $\mathfrak{B}_2,\dots$ such that the following holds.
\begin{enumerate}
\item The random variables $\mathfrak{B}_i$ are $m$-dimensional boxes.
\item The random variables $\mathfrak{B}_i$ form a descending chain $\mathfrak{B}_0\supset \mathfrak{B}_1\supset \mathfrak{B}_2\supset \dots$.
\item The distribution $\mathfrak{B}_{n+1}$ conditioned on $\mathfrak{B}_n$ is some fixed distribution of independent proportion cuts $P_1,\dots,P_m$ along each Cartesian axis. These $P_i$ are fixed over all $n\geq 0$.
\item The proportion cuts $P_i$ are continuous random variables.
\item $\mathbb{E}[\log_B P_i]=\mu_P\in \R$ and $\textup{Var}[\log_B P_i]=\sigma_P^2>0$ for all $1\leq i\leq m$.
\end{enumerate}
\end{definition}

\begin{definition}
Given an $m$-dimensional box $\mathfrak{B}$ and a positive integer $d\leq m$, we say the $d$-\textbf{volume} of $\mathfrak{B}=\prod_{i}[a_i,b_i]$ is the sum of the $d$-dimensional volumes of the $d$-dimensional faces of $\mathfrak{B}$:
\begin{align}
\textup{Vol}_d(\mathfrak{B}) \ := \ 2^{m-d} \sum_{|I|=d}\prod_{i\in I}(b_i-a_i),
\end{align}
where we are summing over all subsets $I\subset \{1,\dots, m\}$ with cardinality $d$.
\end{definition}
\cite{box fragmentation} established a sufficient condition for strong Benford behavior to emerge (see their Theorem 1.9), which involves the volume of the largest $d$-dimensional face.
\begin{theorem}[Maximum Criterion]\cite{box fragmentation}\label{maximum criterion}
Let $\mathfrak{B}:=\mathfrak{B}_0$ be a fixed $m$-dimensional box and $\mathfrak{B}_0\supset \mathfrak{B}_1\supset \cdots$ be a linear-fragmentation process with proportion cuts $P_i$ having probability density functions $f_i:(0,1)\rightarrow (0,\infty)$. Let
\begin{align}
V_d^{(N)} \ := \ \textup{Vol}_d(\mathfrak{B}_N)
\end{align}
be the sequence of volumes obtained from this process and $\mathfrak{m}_d^{(N)}$ denote the maximum product of $d$ sides of $\mathfrak{B}_N$. If $\mathfrak{m}_d^{(N)}$ converges to strong Benford behavior in base $B$ as $N\rightarrow\infty$, then $V_d^{(N)}$ also converges to strong Benford behavior in base $B$ as $N\rightarrow\infty$.
\end{theorem}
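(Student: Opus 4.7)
My plan is to show that $V_d^{(N)}$ is asymptotically dominated by its largest term $\mathfrak{m}_d^{(N)}$, so that $V_d^{(N)}/\mathfrak{m}_d^{(N)} \to 2^{m-d}$ in probability. Once this is established, Benford behavior will transfer from $\mathfrak{m}_d^{(N)}$ to $V_d^{(N)}$ because uniformity mod $1$ is preserved under translation by a constant and under perturbations that vanish in probability. Concretely, I would write the $i$-th side length of $\mathfrak{B}_N$ as $s_i^{(N)} = (b_i - a_i)\prod_{n=1}^N P_i^{(n)}$, with the $P_i^{(n)}$ i.i.d.\ copies of $P_i$ (and mutually independent across $i$ by the definition of a linear-fragmentation process), so that
\begin{equation}
V_d^{(N)} \ = \ 2^{m-d} \sum_{|I|=d} \prod_{i\in I} s_i^{(N)} \qquad \text{and} \qquad \mathfrak{m}_d^{(N)} \ = \ \max_{|I|=d} \prod_{i\in I} s_i^{(N)}.
\end{equation}

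The core claim I would prove is that for every pair of distinct $d$-subsets $I\neq I'$ of $\{1,\dots,m\}$, the absolute log-ratio $\bigl|\log \bigl(\prod_{i\in I}s_i^{(N)}\big/\prod_{i\in I'}s_i^{(N)}\bigr)\bigr|$ tends to $\infty$ in probability. Setting $J := I\setminus I'$ and $K := I'\setminus I$ — disjoint nonempty sets with $|J|=|K|\geq 1$ — I get
\begin{equation}
\log\frac{\prod_{i\in I}s_i^{(N)}}{\prod_{i\in I'}s_i^{(N)}} \ = \ C_{I,I'} + \sum_{n=1}^N W_n, \qquad W_n \ := \ \sum_{j\in J}\log P_j^{(n)} - \sum_{k\in K}\log P_k^{(n)},
\end{equation}
where $C_{I,I'}$ depends only on the initial side lengths. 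Because $\mathbb{E}[\log P_i]=\mu_P$ for all $i$ and $|J|=|K|$, the $W_n$ are i.i.d.\ with mean $0$, and because the relevant cuts are independent with $\textup{Var}[\log P_i]=\sigma_P^2$, I get $\textup{Var}[W_n]=(|J|+|K|)\sigma_P^2>0$. The classical central limit theorem then forces $|\sum_{n=1}^N W_n|$ to grow on the scale of $\sqrt{N}$, so the left-hand magnitude tends to $\infty$ in probability.

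A union bound over the finitely many pairs $(I,I')$ then gives: for every $M>1$ and $\epsilon>0$ there exists $N_0$ such that for $N\geq N_0$, with probability at least $1-\epsilon$, every pairwise ratio lies outside $[1/M,M]$. On that event there is a unique maximizer $I^*$, every other term of the sum is at most $\mathfrak{m}_d^{(N)}/M$, and hence
\begin{equation}
1 \ \leq \ \frac{V_d^{(N)}}{2^{m-d}\,\mathfrak{m}_d^{(N)}} \ \leq \ 1 + \frac{\binom{m}{d}-1}{M}.
\end{equation}
Sending $M\to\infty$ yields $V_d^{(N)}/\mathfrak{m}_d^{(N)} \to 2^{m-d}$ in probability, so $\log_B V_d^{(N)} - \log_B \mathfrak{m}_d^{(N)} \to (m-d)\log_B 2$ in probability. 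Proposition \ref{uniform distribution convergence} together with the hypothesis gives $\log_B \mathfrak{m}_d^{(N)} \bmod 1 \to \textup{Unif}[0,1]$ in distribution; since shifting by a constant is a measure-preserving bijection of $\R/\Z$ and adding an $o_P(1)$ perturbation does not alter the limiting distribution (the cleanest verification is via Weyl/characteristic functions: $\mathbb{E}[e^{2\pi i k \log_B V_d^{(N)}}] = e^{2\pi i k(m-d)\log_B 2}\,\mathbb{E}[e^{2\pi i k \log_B \mathfrak{m}_d^{(N)}}]+o(1) \to 0$ for $k\neq 0$), the same holds for $\log_B V_d^{(N)} \bmod 1$, which is precisely strong Benford behavior base $B$. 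I expect the main obstacle to be a clean execution of the CLT-based divergence of the core claim uniformly over $(I,I')$; the hypotheses $\sigma_P^2>0$ and $|J|=|K|\geq 1$ are exactly what make the limiting Gaussian nondegenerate.
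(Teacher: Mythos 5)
This theorem is cited from \cite{box fragmentation} and is \emph{not} proved in the paper you were given, so there is no in-paper proof to compare against. Evaluating your argument on its own merits: it is correct, and it is the natural way to obtain the transfer. Your two key steps are (i) for every pair of distinct $d$-subsets $I\neq I'$ with symmetric difference $(J,K)$, $|J|=|K|\geq 1$, the sum $\sum_{n=1}^N W_n$ of the i.i.d., mean-zero, positive-variance increments $W_n=\sum_{j\in J}\log_B P_j^{(n)}-\sum_{k\in K}\log_B P_k^{(n)}$ diverges in absolute value in probability by the CLT (here the hypotheses $\mathbb{E}[\log_B P_i]=\mu_P$ for all $i$, $\sigma_P^2>0$, and axis-wise independence are used exactly where needed), so that a union bound over the finitely many pairs $(I,I')$ forces a unique, $M$-fold dominant maximizer with probability tending to $1$; and (ii) since therefore $\log_B V_d^{(N)} - \log_B\mathfrak{m}_d^{(N)} - (m-d)\log_B 2 \to 0$ in probability, the Weyl--criterion computation $\mathbb{E}[e^{2\pi i k\log_B V_d^{(N)}}]=e^{2\pi i k(m-d)\log_B 2}\,\mathbb{E}[e^{2\pi i k\log_B\mathfrak{m}_d^{(N)}}]+o(1)\to 0$ for $k\neq 0$ (bounded convergence, using $|e^{i\theta}-1|\leq\min(|\theta|,2)$) transfers equidistribution mod $1$, which by Proposition \ref{uniform distribution convergence} is strong Benford behavior. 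Both steps are sound, the identity $|J|=|K|=d-|I\cap I'|$ you rely on is correct, and no gaps remain; this is essentially the dominance-plus-characteristic-function argument that the source \cite{box fragmentation} uses for its Theorem 1.9.
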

In the same paper, the Maximum Criterion was verified for $d=1$ in a special case.
\begin{theorem}
Let $P_i^{(j)}$ be i.i.d. with $\log_B P_i^{(j)}$'s having uniform distributions. When $d=1$, the maximum side length of each box
\begin{align}
\mathfrak{m}_1^{(N)} \ = \ \textup{max}_{1\leq i\leq m}P_i^{(1)}\cdots P_i^{(N)}
\end{align}
converges to strong Benford behavior in base $B$ as $N\rightarrow\infty$.
\end{theorem}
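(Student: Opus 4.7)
The plan is to reduce the statement to an equidistribution claim via Proposition~\ref{uniform distribution convergence}, and then to establish that equidistribution through Fourier analysis, exploiting the smoothness of sums of i.i.d.\ uniforms.

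Set $X_i^{(j)} := \log_B P_i^{(j)}$, the i.i.d.\ uniforms on some interval $[a,b] \subset (-\infty, 0)$, and $S_i^{(N)} := \sum_{j=1}^N X_i^{(j)}$. Then
\begin{equation*}
M_N \ := \ \log_B \mathfrak{m}_1^{(N)} \ = \ \max_{1 \leq i \leq m} S_i^{(N)},
\end{equation*}
and by Proposition~\ref{uniform distribution convergence} combined with Weyl's equidistribution criterion (applied to the mod-$1$ distribution of $M_N$), it suffices to show $\mathbb{E}[e^{2\pi i k M_N}] \to 0$ as $N \to \infty$ for every nonzero integer $k$. Since the $S_i^{(N)}$ are i.i.d., the density of $M_N$ is $g_N(t) = m f_{S_N}(t) F_{S_N}(t)^{m-1}$, where $f_{S_N}$ and $F_{S_N}$ are the density and CDF of $S_N$; for $N \geq 3$ this Irwin--Hall-type density is $C^1$ with compact support, so $g_N$ is $C^1$ with compact support and vanishes at $\pm\infty$. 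A single integration by parts then yields
\begin{equation*}
|\mathbb{E}[e^{2\pi i k M_N}]| \ \leq \ \frac{\|g_N'\|_1}{2\pi |k|} \ \leq \ \frac{m\,\|f_{S_N}'\|_1 + m(m-1)\,\|f_{S_N}\|_\infty}{2\pi|k|},
\end{equation*}
where the second inequality uses $F_{S_N} \leq 1$ together with $\int f_{S_N} = 1$.

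The central estimate is thus $\|f_{S_N}\|_\infty = O(1/\sqrt{N})$. Because the uniform density is log-concave and log-concavity is preserved under convolution (Pr\'ekopa--Leindler), $f_{S_N}$ is log-concave and hence unimodal; for any $C^1$ unimodal density one has $\|f_{S_N}'\|_1 = 2 \|f_{S_N}\|_\infty$, so both summands above share the same decay rate. The sup-norm bound follows from the Fourier inversion estimate $\|f_{S_N}\|_\infty \leq \frac{1}{2\pi}\int|\phi_X(u)|^N\,du$ combined with the facts that, because $\phi_X(u) = \mathrm{sinc}(u(b-a)/2)\,e^{iu(a+b)/2}$, we have $|\phi_X(u)| \leq e^{-cu^2}$ for small $|u|$, $|\phi_X(u)| \leq \rho < 1$ uniformly on any compact set avoiding $0$, and $|\phi_X(u)| = O(|u|^{-1})$ for large $|u|$. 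Splitting the integral into these three regions yields a central Gaussian contribution of order $1/\sqrt{N}$ and exponentially small contributions from the rest, a standard local-central-limit-type computation. Combining, $|\mathbb{E}[e^{2\pi i k M_N}]| = O_{m,k}(N^{-1/2}) \to 0$, which completes the proof via Proposition~\ref{uniform distribution convergence}. The main technical step is the sup-norm estimate on $f_{S_N}$: while the splitting argument is routine, verifying the three regimes of $|\phi_X|$ and choosing the split points uniformly in $N$ requires some care, and is where any quantitative rate would be extracted.
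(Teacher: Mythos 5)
Your proof is correct, but it follows a genuinely different route from the one the paper develops to prove its more general Theorem~\ref{answer to conjecture arbitrary dimension}, of which this statement is the $d=1$, log-uniform special case. The paper works in physical space: it writes the density of the maximum via order statistics as $m f(z)F(z)^{m-1}$, splits it into a Gaussian main term plus error using Berry--Esseen-type assumptions on $f$ and $F$, applies the Mean Value Theorem on each short interval $[(a+n)/\sqrt{N},(b+n)/\sqrt{N}]$ to control the local oscillation, and finally recognizes the totals as Riemann sums converging to $b-a$. You instead work in frequency space: Weyl's criterion reduces strong Benford behavior to decay of $\mathbb{E}[e^{2\pi i k M_N}]$, one integration by parts against the compactly supported $C^1$ density $g_N(t)=mf_{S_N}(t)F_{S_N}(t)^{m-1}$ converts this to $\|g_N'\|_1/(2\pi|k|)$, and log-concavity (giving unimodality, hence $\|f_{S_N}'\|_1 = 2\|f_{S_N}\|_\infty$) collapses everything to a single sup-norm estimate on the Irwin--Hall density, which the standard three-regime characteristic-function bound gives as $O(N^{-1/2})$. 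The reduction via log-concavity so that a single estimate controls both summands of $\|g_N'\|_1$ is an efficient touch. Each approach buys something different: yours is shorter, more self-contained, and produces an explicit power-saving Weyl-sum rate $O_{m,k}(N^{-1/2})$ essentially for free; the paper's heavier machinery is designed to scale to $d>1$, where $\log_B\mathfrak{m}_d^{(N)}$ is the sum of the top $d$ order statistics and its density is a $d$-fold iterated integral of the joint order-statistic kernel, which is no longer unimodal in any obvious way, so the integration-by-parts/unimodality shortcut would need nontrivial extension, whereas the paper's Gaussian-bound-plus-Riemann-sum argument carries over directly.
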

It was conjectured in the same paper (see Conjecture 4.1) that for any linear fragmentation process, $m_d^{(N)}$ converges to strong Benford behavior.
\begin{conj}\label{box fragmentation conjecture}
Every linear fragmentation process satisfies the Maximum Criterion in all dimensions $1\leq d\leq m$.
\end{conj}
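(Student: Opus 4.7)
By Proposition~\ref{uniform distribution convergence} the plan is to show that $\log_B \mathfrak{m}_d^{(N)}\bmod 1$ converges to the uniform distribution on $[0,1]$, which by Weyl's equidistribution criterion reduces to proving that for every nonzero integer $k$,
\begin{equation*}
\mathbb{E}\bigl[e^{2\pi i k \log_B \mathfrak{m}_d^{(N)}}\bigr] \ \longrightarrow \ 0 \quad \text{as } N\to\infty.
\end{equation*}
Setting $T_i^{(N)}:=\sum_{j=1}^{N}\log_B P_i^{(j)}$ for $1\leq i\leq m$, these are i.i.d.\ sums of $N$ i.i.d.\ copies of $X:=\log_B P$, which has mean $\mu_P$, variance $\sigma_P^2>0$, and (since $P$ has a density) is non-lattice, so each $T_i^{(N)}$ has a density $f_N$. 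With the order statistics $T_{(1)}^{(N)}\leq\cdots\leq T_{(m)}^{(N)}$, the key quantity is
\begin{equation*}
M_N \ := \ \log_B \mathfrak{m}_d^{(N)} \ = \ T_{(m-d+1)}^{(N)}+\cdots+T_{(m)}^{(N)},
\end{equation*}
the sum of the top $d$ order statistics of $m$ i.i.d.\ copies.

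The next step is to exploit exchangeability to write
\begin{equation*}
\mathbb{E}\bigl[e^{2\pi i k M_N}\bigr] \ = \ m!\int_{t_1<\cdots<t_m} e^{2\pi i k(t_{m-d+1}+\cdots+t_m)}\prod_{j=1}^{m}f_N(t_j)\,dt_1\cdots dt_m,
\end{equation*}
so that oscillatory factors appear only on the top $d$ coordinates. Introducing the partial Fourier tail
\begin{equation*}
g_N(v) \ := \ \int_v^\infty e^{2\pi i k t}\,f_N(t)\,dt,
\end{equation*}
an induction on $d$ (at each stage substituting $u=1-F_N(t)$, where $F_N$ is the cdf of $T^{(N)}$) should give
\begin{equation*}
\Bigl|\int_{v\leq t_1\leq\cdots\leq t_d} e^{2\pi i k(t_1+\cdots+t_d)}\prod_{j=1}^{d}f_N(t_j)\,dt_j\Bigr|\ \leq\ \|g_N\|_\infty\cdot\frac{[1-F_N(v)]^{d-1}}{(d-1)!}.
\end{equation*}
Pairing this with the standard beta-type integral over the $m-d$ lower coordinates collapses everything to the clean bound $\bigl|\mathbb{E}[e^{2\pi i k M_N}]\bigr|\leq m\,\|g_N\|_\infty$. (The boundary case $d=m$ also has the simpler direct argument $\mathbb{E}[e^{2\pi i k M_N}]=\phi_X(2\pi k)^{mN}\to 0$, since $X$ is non-lattice.) The problem thus reduces to the single-variable Fourier estimate $\|g_N\|_\infty\to 0$.

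This last step is the main obstacle. Heuristically $f_N$ concentrates on scale $\sqrt{N}$ while $e^{2\pi i k t}$ oscillates on scale $1/|k|$, so $g_N$ should decay like $1/\sqrt{N}$. To make this rigorous I would invoke a local central limit theorem: since $X$ has a density and finite variance, Gnedenko's non-lattice local CLT gives pointwise convergence of the rescaled density $\tilde f_N(s):=\sqrt{N}\sigma_P f_N(N\mu_P+\sqrt{N}\sigma_P s)$ to the standard normal density $\phi(s)$, and Scheff\'e's lemma upgrades this to $\|\tilde f_N-\phi\|_{L^1}\to 0$. A change of variables then writes $g_N(v)=e^{2\pi i k N\mu_P}\int_{s_0}^\infty e^{2\pi i k\sqrt{N}\sigma_P s}\,\tilde f_N(s)\,ds$ with $s_0=(v-N\mu_P)/(\sqrt{N}\sigma_P)$; comparing against $\phi$ and integrating by parts (using that $\phi'\in L^1$) yields
\begin{equation*}
|g_N(v)| \ \leq \ \frac{C}{\sqrt{N}} + \|\tilde f_N-\phi\|_{L^1} \ \xrightarrow[N\to\infty]{} \ 0
\end{equation*}
uniformly in $v$. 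The delicate technical point is verifying that the paper's hypotheses on the density of $P$ are sufficient to trigger the local CLT; this is immediate under mild extra regularity (e.g., $f_P$ bounded), and in the fully general case one can fall back on a mollification or a direct Fourier argument relying on $|\phi_X(2\pi k)|<1$. Once $\|g_N\|_\infty\to 0$ is secured, Weyl's criterion is verified and Proposition~\ref{uniform distribution convergence} delivers strong Benford convergence of $\mathfrak{m}_d^{(N)}$, confirming the conjecture.
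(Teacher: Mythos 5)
Your proposal is correct in outline and takes a genuinely different route from the paper. The paper works entirely on the density side: it writes the joint PDF of the top $d$ order statistics (Proposition 3.4), splits into a Gaussian main term and an Edgeworth-type error, then shows equidistribution by bounding the second derivative of the CDF (Leibniz rule plus a recursive change-of-variables / cancellation trick) and recognizing a Riemann sum. You instead work on the Fourier side via Weyl's criterion: writing $\mathbb{E}[e^{2\pi i k M_N}]$ as an $m!$-weighted integral over the simplex $t_1<\cdots<t_m$, peeling off the oscillatory top-$d$ block by induction in $d$ (your bound $\|g_N\|_\infty\,[1-F_N(v)]^{d-1}/(d-1)!$ checks out, and the beta integral over the remaining $m-d$ coordinates does collapse to $m\|g_N\|_\infty$), and then reducing everything to a single scalar estimate $\|g_N\|_\infty\to 0$. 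The order-statistics bookkeeping is handled much more cleanly this way than in the paper's multi-integral manipulations, and the final bound is quantitatively simple. The boundary case $d=m$ as $\phi_X(2\pi k)^{mN}\to0$ is also clean.

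The one place to be explicit is exactly the point you flag: the decay $\|g_N\|_\infty\to 0$ does not follow from the bare hypothesis ``$P$ has a density.'' Gnedenko's local CLT needs some boundedness of $f_P$ (or of a convolution power), and $L^1$-convergence of the rescaled density is what you actually use. The paper is careful here too: its Theorem 1.12 does not claim the conjecture in full, but rather proves it under condition (3), which posits exactly a pointwise Edgeworth-type expansion $f_{Z_i^{(N)}}=\varphi + A$ with $A=O(N^{-1/2-\delta})$ together with compact support of $\log P_i$. Under those hypotheses your $L^1$-estimate $\|\tilde f_N-\phi\|_{L^1}\ll N^{-\delta}$ is immediate (pointwise bound times the $O(\sqrt{N})$-length of the support, plus an exponentially small Gaussian tail), and your integration-by-parts gives the $O(N^{-1/2})$ piece, so the reduction goes through with room to spare. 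In other words, your argument proves the same scope of result as the paper — possibly a bit more if one assumes only a local CLT rather than a full Edgeworth expansion — but neither argument resolves the conjecture unconditionally, and you should state the needed regularity assumption up front rather than as a parenthetical at the end.
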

We prove the conjecture for any $m\geq 2$ and $d\leq m$ under some conditions on the linear fragmentation process, which we shall explain more in Section \ref{higher dimension}.
\begin{theorem}\label{box fragmentation conj answer arbitrary dim}
Suppose that $\mathfrak{B}_0\supset \mathfrak{B}_1\supset \mathfrak{B}_2\supset \cdots$ is a linear fragmentation process on an $m$-dimensional box for an arbitrary $m\geq 2$, such that the proportion cut $P_i^{(j)}$ at every stage satisfies the following.
\begin{enumerate}
\item Condition 1: $P_i^{(j)}$'s are i.i.d. for all $1\leq i\leq m$ and $1\leq j\leq N$. That is, $P_i^{(j)}$'s are all independent and there exists a random variable $P$ such that $P_i^{(j)}\sim P$ for all $i,j$;
\item Condition 2: Let the mean and the variance of $\log_B(P)$ be $\mu_P$ and $\sigma_P$. Then $\log_B(P)$ has finite centered moments, i.e., $\mathbb{E}[(\log_B(P)-\mu_P)^k]<\infty$ for all $k\geq 1$ and the characteristic function of $(\log_B(P)-\mu_P)/\sigma_P$ admits Taylor expansion. Moreover, the variance is nonzero;
\item Condition 3: Let $f(x)$ be the probability density function of $\log_B(P)$, then we have $\textup{ess sup}_x f(x)<\infty$;
\item Condition 4: $f(x)$ is differentiable with $\textup{sup}_x |f'(x)|<\infty$.
\end{enumerate}
Then this linear fragmentation process satisfies the condition of the Maximum Criterion for any $d\leq m$, i.e., $\mathfrak{m}_d^{(N)}$ converges to strong Benford behavior in base $B$ as $N\rightarrow\infty$ for any $d\leq m$, thus by Theorem \ref{maximum criterion}, $V_d^{(N)}$ converges to strong Benford behavior in base $B$ as $N\rightarrow\infty$ for any $d\leq m$.
\end{theorem}

\section{Proof of Theorem \ref{multiproportion thm}}

\subsection{Preliminary}\label{prelim subsection} In \cite{FILM}, Fang, Irons, Lippelman and Miller proved that if $y_i\in\Q$ for all $1\leq i\leq m-1$, the distribution of the logarithms mod 1 of the stick lengths converges to a discrete distribution instead of a uniform distribution on $[0,1]$. Hence, having $y_i\not\in\Q$ for some $1\leq i\leq m-1$ is a necessary condition for the distribution of the stick lengths to follow strong Benford behavior.

For the remainder of this section, we show that having $y_i\not\in\Q$ for some $1\leq i\leq m-1$ is also a sufficient condition for the distribution of the stick lengths to converge to strong Benford's behavior. After reordering the $p_i^{k_i}$'s and thus the $y_i$'s in the factorization of the stick length $A^{(N)}_{k_1,k_2,\dots, k_m}:=p_1^{k_1}p_2^{k_2}\cdots p_m^{k_m}$, it suffices to show that having the first exponent $y_1\not\in\Q$ is a sufficient condition for the distribution of the stick lengths to converge to strong Benford's behavior.

We know that after stage $N$, the number of sticks of length $A^{(N)}_{k_1,k_2,\dots,k_m}$ is
\begin{align}
\binom{k_1+k_2+\cdots+k_m}{k_1,k_2,\dots,k_m} \ = \ \binom{N}{k_1,k_2,\dots,k_m},
\end{align}
where the multinomial coefficients are defined in Definition \ref{multinomial coefficient defn}. From the proof of Theorem 1 of \cite{FILM}, we know that the stick length $A^{(N)}_{k_1,k_2,\dots,k_m}$ has the factorization
\begin{align}
A^{(N)}_{k_1,k_2,\dots,k_m} \ = \ p_1^{k_1}p_2^{k_2}\cdots p_m^{k_m} \ = \ \left(\frac{p_1}{p_2}\right)^{k_1}\left(\frac{p_2}{p_3}\right)^{k_1+k_2}\cdots\left(\frac{p_{m-1}}{p_m}\right)^{\sum_{j=1}^{m-1}k_j}(p_m)^N,
\end{align}
which motivates us to define the exponents $y_i=\log_B(p_i/p_{i+1})$ for $1\leq i\leq m-1$. Now, fix an interval $(a,b)\subset (0,1)$. Let
\begin{align}
\chi_{(a,b)}(x) \ : = \ \mathbbm{1}\Big(\log_B(x)\textup{ mod 1}\in (a,b)\Big)
\end{align}
be the indicator function for $\log_B(x)\textup{ mod 1}\in (a,b)$. Then after stage $N$, the probability that the logarithm mod 1 of a stick length is in $(a,b)$ equals
\begin{align}\label{formula for probability of log mod 1 of stick length between a and b}
F_N(a,b) \ := \ \frac{1}{m^N}\sum_{\substack{0\leq k_1,k_2\dots,k_m \leq N \\ k_1+k_2+\cdots+k_m=N}}\binom{N}{k_1,k_2,\dots,k_m}\chi_{(a,b)}\left(\prod_{i=1}^m p_i^{k_i}\right).
\end{align}
Our goal is to prove that as $N\rightarrow\infty$,
\begin{align}
F(a,b) \ := \ \lim_{N\rightarrow\infty}F_N(a,b) \ = \ b-a,
\end{align}
which by the Uniform Distribution Characterization implies strong Benford behavior.

\subsection{Combinatorial identities} The key steps to our proof are two combinatorial identities related to multinomial coefficients, which we present in this subsection. We first review the definition of multinomial coefficients.
\begin{definition}\label{multinomial coefficient defn}
Suppose that $n, k_1, k_2, \dots, k_m\geq 0$ and $k_1+\cdots+k_m=n$, then we define the multinomial coefficients by
\begin{align}
\binom{n}{k_1, k_2,\dots, k_m} \ := \ \frac{n!}{k_1!k_2!\cdots k_m!}.
\end{align}
\end{definition}
The following identity allows us to express multinomial coefficients as the product of binomial coefficients. As we shall see, this forms a key ingredient to our proof, as it helps reduce our multi-proportion problem to the original single-proportion problem in \cite{dependent}. 
\begin{lemma}\label{multinomial identity I}\cite{Stan}
For $m\geq 1$,
\begin{align}
\binom{k_1+k_2+\cdots+k_m}{k_1,k_2,\dots, k_m} \ = \ \prod_{i=1}^m\binom{k_1+\cdots+k_i}{k_i}.
\end{align}    
\end{lemma}
The proof for Lemma \ref{multinomial identity I} follows a straightforward inductive argument based on the expansion of the multinomial and binomial coefficients. Below is another identity that comes in handy when evaluating sums of multinomial coefficients.
\begin{lemma}\label{multinomial identity II}\cite{Stan}
For $N\in \Z_{\geq 0}$ and $m\in \Z_{>0}$,
\begin{align}
m^N &\ = \ \sum_{\substack{0\leq k_1,k_2,\dots,k_m\leq N \\ k_1+k_2+\cdots+k_m=N}}\binom{N}{k_1,k_2,\dots, k_m} \ = \ \sum_{\substack{0\leq k_1,\dots, k_{m-1}\leq N \\ k_1+k_2+\cdots+k_{m-1}=N}}2^{k_1}\binom{N}{k_1,k_2,\cdots, k_{m-1}}.
\end{align}
\end{lemma}
The proof for Lemma \ref{multinomial identity II} relies on two different representation and expansion of $m^N$: one as $(\underbrace{1+\cdots+1}_\textrm{$N$})^N$ and the other as $(2+\underbrace{1+\cdots+1}_\textrm{$N-2$})^N$. See \cite{Stan} for more.

\subsection{Truncation}\label{Truncation subsection}
First, we would like to make some truncation to the sum in \eqref{formula for probability of log mod 1 of stick length between a and b}. We show that for any $\epsilon\in (0,1)$, the contribution to $F_N(a,b)$ of multinomial coefficients with $k_1+k_2< N^\epsilon$ is negligible. This allows us to only consider stick lengths $A_{k_1,k_2,\dots,k_m}$ where $k_1+k_2$ are sufficiently large.
\begin{prop}\label{multinomial sum tail bound}
For any $0<\epsilon<1$,
\begin{align}
\sum_{\substack{0\leq k_1,k_2\dots,k_m\leq N \\ k_1+k_2+\cdots+k_m=N \\ k_1+k_2< N^\epsilon}}\binom{N}{k_1,k_2,\dots,k_m} \ \leq \ (m-2)^NN^{N^\epsilon+2}.
\end{align}
\end{prop}
\begin{proof}
We know that if $k_1+k_2+\cdots+k_m=N$, then
\begin{align}
\binom{N}{k_1,k_2,\dots,k_m} 
&\ = \ \frac{N(N-1)\cdots (N-(k_1+k_2)+1)}{k_1!k_2!}\cdot \frac{(N-(k_1+k_2))!}{k_3!k_4!\cdots k_m!} \nonumber\\
&\ \leq \ N^{k_1+k_2}\cdot \binom{N-(k_1+k_2)}{k_3,k_4,\dots, k_m}.
\end{align}
Hence,
\begin{align}
&\sum_{\substack{0\leq k_1,k_2,\dots,k_m\leq N \\ k_1+k_2+\cdots+k_m=N \\ k_1+k_2< N^\epsilon}}\binom{N}{k_1,k_2,\dots,k_m} \nonumber\\
&\ = \ \sum_{\substack{0\leq k_1,k_2< N^\epsilon \\ k_1+k_2< N^\epsilon}}N^{k_1+k_2}\sum_{\substack{0\leq k_3,k_4,\dots,k_m\leq N-(k_1+k_2) \\ k_3+k_4+\cdots+k_m=N-(k_1+k_2)}}\binom{N-(k_1+k_2)}{k_3,k_4,\dots,k_m} \nonumber\\
&\ \leq \ (m-2)^N\sum_{\substack{0\leq k_1,k_2< N^\epsilon\\ k_1+k_2< N^\epsilon}}N^{k_1+k_2} \nonumber\\
&\ \leq \ (m-2)^N (N^\epsilon)^2 N^{N^\epsilon} \ \leq \ (m-2)^N N^{N^\epsilon+2}.
\end{align}
\end{proof}
We know the probability that a stick length $p_1^{k_1}p_2^{k_2}\cdots p_m^{k_m}$ satisfies $k_1+k_2< N^\epsilon$ is
\begin{align}\label{multinomial sum probability}
\frac{1}{m^N}\sum_{\substack{0\leq k_1,k_2\dots,k_m\leq N \\ k_1+k_2+\cdots+k_m=N \\ k_1+k_2< N^\epsilon}}\binom{N}{k_1,k_2,\dots,k_m}.
\end{align}
According to Proposition \ref{multinomial sum tail bound}, we have that \eqref{multinomial sum probability} is bounded above by $((m-2)/m)^NN^{N^\epsilon+2}$.
Hence, the logarithm of the probability is bounded above by
\begin{align}
\log\left(\left(\frac{m-2}{m}\right)^NN^{N^\epsilon+2}\right) \ = \ N\log\left(\frac{m-2}{m}\right)+(N^\epsilon+2)\log(N).
\end{align}
It is clear that the above goes to $-\infty$ as $N\rightarrow\infty$. Thus the probability goes to 0 as $N\rightarrow\infty$. So it suffices to consider the case where $k_1+k_2\geq N^\epsilon$. Now, fix $k(N)\geq N^\epsilon$ and let $0\leq k_1,k_2\leq k(N)$ with $k_1+k_2=k(N)$ and $0\leq k_3,k_4,\dots,k_m\leq N-k(N)$ with $k_3+k_4+\dots+k_m=N-k(N)$. By \cite{dependent}, we know that the frequency $k_1$ follows a binomial distribution with mean $k(N)/2$ and standard deviation $\sqrt{k(N)}/2$. Pick some $\delta\in (0,\epsilon/10)$. We see that it suffices to consider cases where $|k_1-k(N)/2|< (\lceil N^\delta\rceil \sqrt{k(N)})/2$, because the probability that $k_1$ is outside this range is asymptotically small by Chebyshev's inequality:
\begin{align}\label{stick model Chebyshev truncation}
\mathbb{P}\left(\left|k_1-\frac{k(N)}{2}\right| \ \geq \ \frac{\lceil N^\delta\rceil \sqrt{k(N)}}{2}\right) \ \leq \ \frac{1}{\lceil N^{\delta}\rceil^2}.
\end{align}

\subsection{Near uniform probability within small intervals} We keep the same notation and definition and fix $\epsilon$, $\delta$, $k(N):=k_1+k_2$, $k_3,k_4,\dots, k_m$ as before. Our goal is to prove that the logarithms of the stick length $A^{(N)}_{k_1,k_2,\dots,k_m}$ is roughly uniformly distributed over small intervals of $k_1$. Since $\lceil N^\delta\rceil=o(\lceil N^\delta\rceil\lceil\sqrt{k(N)}/2\rceil)$, we can evenly divide the range of $k_1$ between $k(N)/2\pm \lceil N^\delta\rceil\lceil\sqrt{k(N)}/2\rceil$ into intervals of $\lceil N^\delta \rceil$ values of $k_1$, where the $\ell$\textsuperscript{th} interval starts with $k_{1,\ell}=k_{1,\ell,0}$ and ranges over $k_{1,\ell,i}$ for $0\leq i\leq \lceil N^\delta\rceil-1$, as defined below: 
\begin{align}
&k_{1,\ell} \ := \ f_\ell\left(\frac{k(N)}{2}\right)+\ell \lceil N^\delta \rceil, \nonumber\\
&k_{1,\ell,i} \ := \ f_\ell\left(\frac{k(N)}{2}\right)+\ell \lceil N^\delta \rceil+i, \quad 0\leq i\leq N^\delta-1,
\end{align}
where $f_\ell(\cdot):=\lceil\cdot\rceil$ when $\ell\geq 0$ and $f_\ell(\cdot):=\lfloor\cdot\rfloor$ when $\ell<0$. We see that $\ell$ ranges from $-\lceil\sqrt{k(N)}/2\rceil$ to $\lceil\sqrt{k(N)}/2\rceil$. Note that we are using the floor and the ceiling functions to ensure that $f_{1,\ell,i}$'s have integer values. For convenience, we will make a slight abuse of notation to drop the floor and the ceiling signs from now on, as they have negligible effect on our calculation. We want to show that the difference $\left|\binom{k(N)}{k_{1,\ell,i}}-\binom{k(N)}{k_{1,\ell,j}}\right|$ is asymptotically smaller than $\binom{k(N)}{k_{1,\ell}}$ uniformly for all $0\leq i<j\leq N^\delta-1$, which would imply that the stick length is roughly uniformly distributed over small intervals of $k_1$. Since the binomial distribution is symmetric around its mean, it suffices to look at $\ell\geq 0$. Moreover, the probability density function is monotonically decreasing to the right of the mean, the difference is uniformly bounded within each interval and
\begin{align}
\left|\binom{k(N)}{k_{1,\ell,i}}-\binom{k(N)}{k_{1,\ell,j}}\right| &\ \leq \ \left|\binom{k(N)}{k_{1,\ell}}-\binom{k(N)}{k_{1,\ell+1}}\right|.
\end{align}
We follow Section (5.52) of \cite{dependent} to obtain a bound for the difference.
\begin{prop}\label{binomial difference bound}
For $\ell\leq \sqrt{k(N)}/2$, 
\begin{align}
\left|\binom{k(N)}{k_{1,\ell}}-\binom{k(N)}{k_{1,\ell+1}}\right| \ \leq \  O\left(\binom{k(N)}{k_{1,\ell}}\cdot N^{-\frac{3\epsilon}{10}}\right).
\end{align}
\end{prop}
Since the proof follows a similar argument to Section (5.52) of \cite{dependent}, we leave the proof details to \ref{appendix binomial difference bound}.

\subsection{Equidistribution within small intervals}\label{stick model equidistribution subsection} In this subsection, we want to show that for fixed $k(N), k_3, \dots, k_m$, the logarithm of the stick length $\log_B(A^{(N)}_{k_{1,\ell,i}, k_2, \dots, k_m})$ for $0\leq i\leq \lceil N^\delta\rceil-1$ converges to being equidistributed mod 1, in the sense that the logarithm of the stick length is thought of as a random variable that takes value in $\log_B(A^{(N)}_{k_{1,\ell,i},k_2,\dots, k_m})$ for $0\leq i\leq \lceil N^\delta \rceil-1$ with equal probabilities. We first state the following theorem which provides an easy criterion for checking equidistribution mod 1.

\begin{corollary}\cite{power saving}\label{equidistribution irrationality condition}
For any $a\in\R$, the arithmetic progression $\{a+n d\}_{n=1}^\infty$ is equidistributed mod 1 if and only if $d\not\in\Q$.
\end{corollary}

Now, note that for fixed $k(N)$ and $\ell$,
\begin{align}
&\log_B(A^{(N)}_{k_{1,\ell,i}, k_2, \dots, k_m}) \nonumber\\
&\ = \ k_{1,\ell,i}\log_B\left(\frac{p_1}{p_2}\right)+(k_1+k_2)\log_B\left(\frac{p_2}{p_3}\right)+\cdots+\left(\sum_{j=1}^{m-1}k_j\right)\log_B\left(\frac{p_{m-1}}{p_m}\right)+N\log(p_m) \nonumber\\
&\ = \ k_{1,\ell,i}\log_B\left(\frac{p_1}{p_2}\right)+k(N)\log_B\left(\frac{p_2}{p_3}\right)+\cdots+\left(\sum_{j=1}^{m-1}k_j\right)\log_B\left(\frac{p_{m-1}}{p_m}\right)+N\log(p_m)
\end{align}
forms an arithmetic progression over the $\ell$\textsuperscript{th} interval of $k_1$, i.e., $k_{1,\ell,i}$ for $0\leq i\leq N^{\delta}-1$, with common difference $\log_B(p_1/p_2)\notin\Q$. Then by Corollary \ref{equidistribution irrationality condition}, $\log_B(A^{(N)}_{k_{1,\ell,i},k_2, \dots, k_m})$ converges to equidistribution mod 1. However, Corollary \ref{equidistribution irrationality condition} does not provide any quantitative measure of the discrepancy from uniform distribution on $[0,1]$. It turns out that the key ingredient behind this quantification is irrationality exponent. We restate the definition here.


\begin{definition}\cite{power saving}
Suppose that $x$ is a real number. The \textbf{irrationality exponent} $\mu_x$ of $x$ is the supremum of the set of $\mu$ such that $0<|x-p/q|<1/q^\mu$ is satisfied by an infinite number of coprime integer pairs $(p,q)$ with $q>0$. If such a set does not exist, then we say $x$ has irrationality exponent $\infty$.
\end{definition}

Since $\Q$ is dense in $\R$, then every real number can be approximated by rational numbers. However, what differentiates them is how well they can be approximated. Irrationality exponent measures exactly how well a real number can be approximated by rational numbers. The bigger the irrationality exponent, the finer the rational approximation can be. The following are some well-known facts about irrationality exponent, which comes handy later in this section.

\begin{prop}\cite{power saving}
The irrationality exponent of any rational number is 1. As a consequence of Dirichlet's approximation theorem, which states that for any irrational number $x$,
\begin{align}
\left|x-\frac{p}{q}\right| \ < \ \frac{1}{q^2}
\end{align}
for infinitely $(p,q)$ where $p$ and $q$ are coprime, the irrationality exponent of any irrational number is at least 2. Further more, any algebraic irrational (irrational numbers that are zeros of polynomials over $\Q$) has irrationality exponent exactly 2.
\end{prop}

We now state a theorem that quantifies the discrepancy of the logarithm of the stick length in terms of irrationality exponent.

\begin{theorem}\cite[Theorem 3.2]{power saving}\label{power saving theorem}
Let $\kappa$ be the irrationality exponent of $\log_{B}(p_1/p_2)$ and $J_{\ell}(a,b)\subset \{0,1,\dots, N^\delta-1\}$ be the set of indices $i$ such that $\log_{B}(A^{(N)}_{k_{1,\ell,i},k_2,\dots,k_m})$ mod 1 is in $(a,b)$. If $\kappa<\infty$, then
\begin{align}
|J_\ell(a,b)| \ = \ (b-a)N^\delta+O\left(N^{\delta\left(1-\frac{1}{\kappa-1}+\epsilon'\right)}\right),
\end{align}
for every $\epsilon'>0$ and there is a power saving with the error term. The error term is optimal in the sense that
\begin{align}
\left|\frac{|J_\ell(a,b)|}{N^\delta}-(b-a)\right| \ = \ \Omega\left(N^{\delta(-\frac{1}{\kappa-1})-\epsilon'}\right)
\end{align}
for every $\epsilon'>0$. If $\kappa=\infty$, then
\begin{align}
|J_\ell(a,b)| \ = \ (b-a)N^\delta+o(N^\delta).
\end{align}
\end{theorem}
Note that \cite{power saving} has a different convention for irrationality exponent and if the irrationality exponent of $x$ is $\mu$ in our definition, then it is $\mu-1$ in their definition. This is the reason for the change made to the presentation of Theorem \cite{power saving}, where there is a $-1/(\kappa-1)$ instead of a $1/\kappa$ in the exponent.

\subsection{Evaluation of sums within and across intervals} We first proceed to the case where $\log_{B}(p_1/p_2)$ has a finite irrationality exponent $\kappa<\infty$. For fixed $k(N):=k_1+k_2,k_3,k_4,\dots k_m$, and $\ell\leq \sqrt{k(N)}/2$, we first count the number of $A^{(N)}_{k_{1,\ell,i},k_2,\dots,k_m}$  within the $\ell$\textsuperscript{th} interval such that $\log_{B}(A^{(N)}_{k_{1,\ell,i},k_2,\dots,k_m})$ mod 1 is in $(a,b)$. By definition, this is given by
\begin{align}
\sum_{i\in J_\ell(a,b)}\binom{k(N)}{k_{1,\ell,i}}.
\end{align}
By Proposition \ref{binomial difference bound} and Section \ref{stick model equidistribution subsection}, we have
\begin{align}\label{binomial sum over interval w power saving}
\sum_{i\in J_\ell(a,b)}\binom{k(N)}{k_{1,\ell,i}} &\ = \ \sum_{i\in J_\ell(a,b)}\Bigg\{\binom{k(N)}{k_{1,\ell}}+O\left(\binom{k(N)}{k_{1,\ell}}N^{-\frac{3\epsilon}{10}}\right)\Bigg\} \nonumber\\
&\ = \ \Bigg\{\binom{k(N)}{k_{1,\ell}}\sum_{i\in J_\ell(a,b)}1\Bigg\}+O\left(\binom{k(N)}{k_{1,\ell}}N^{-\frac{3\epsilon}{10}}\sum_{i\in J_\ell(a,b)} 1\right) \nonumber \\
&\ = \ (b-a)N^\delta\binom{k(N)}{k_{1,\ell}}+O\left(\binom{k(N)}{k_{1,\ell}}N^{\delta\left(1-\frac{1}{\kappa-1}+\epsilon'\right)}\right).
\end{align}
We justify the error term in the last line of \eqref{binomial sum over interval w power saving}. Since $\kappa$ is the irrationality exponent of an irrational number, then $\kappa\geq 2$ (see \cite{BBS}). Also, $\epsilon'>0$, so $-1<-1/(\kappa-1)+\epsilon'$. Moreover, for some proper choice of $\epsilon'$, we have $-1/(\kappa-1)+\epsilon'<0$. Combining these with the fact that $\delta\in (0,\epsilon/10)$ gives the dominant error term in the last line of \eqref{binomial sum over interval w power saving}.

Now, we count the number of stick lengths $A^{(N)}_{k_{1,\ell,i},k_2,\dots,k_m}$ such that $\log_{B}(A^{(N)}_{k_{1,\ell,i},k_2,\dots, k_m})$ mod 1 is in $(a,b)$ over all the intervals. By the truncation in Section \ref{Truncation subsection} the main term comes from the sum over $\ell$ from $-\sqrt{k(N)}/2$ to $\sqrt{k(N)}/2$. The number is given by
\begin{align}\label{binomial sum over all intervals}
\sum_{\ell=-k(N)/(2N^\delta)}^{k(N)/(2N^\delta)}\sum_{i\in J_\ell(a,b)}\binom{k(N)}{k_{1,\ell,i}} \ = \ \sum_{\ell=-\sqrt{k(N)}/2}^{\sqrt{k(N)}/2}\sum_{i\in J_\ell(a,b)}\binom{k(N)}{k_{1,\ell,i}}+\sum_{|\ell|>\sqrt{k(N)}/2}\sum_{i\in J_\ell(a,b)}\binom{k(N)}{k_{1,\ell,i}}.
\end{align}
Using \eqref{stick model Chebyshev truncation}, we can provide an upper bound on the second sum on the RHS of \eqref{binomial sum over all intervals}:
\begin{align}\label{truncated error term for binomial sum over all intervals}
\sum_{|\ell|>\sqrt{k(N)}/2}\sum_{i\in J_\ell(a,b)}\binom{k(N)}{k_{1,\ell,i}}\nonumber &\ \leq \
\mathbb{P}\left(\left|k_1-\frac{k(N)}{2}\right| \ \geq \ \frac{N^\delta\sqrt{k(N)}}{2}\right)\cdot \sum_{0\leq k_1\leq k(N)}\binom{k(N)}{k_1} \nonumber\\
&\ = \ \mathbb{P}\left(\left|k_1-\frac{k(N)}{2}\right| \ \geq \ \frac{N^\delta\sqrt{k(N)}}{2}\right)\cdot 2^{k(N)} \ = \ O\left(\frac{2^{k(N)}}{N^{2\delta}}\right).
\end{align}
On the other hand, we apply \eqref{binomial sum over interval w power saving} to the first sum on the RHS of \eqref{binomial sum over all intervals}:
\begin{align}\label{truncated main term for binomial sum over all intervals with error}
\sum_{\ell=-\sqrt{k(N)}/2}^{\sqrt{k(N)}/2}\sum_{i\in J_\ell(a,b)}\binom{k(N)}{k_{1,\ell,i}} &\ = \ \sum_{\ell=-\sqrt{k(N)}/2}^{\sqrt{k(N)}/2}(b-a)N^\delta\binom{k(N)}{k_{1,\ell}} \nonumber\\
&\quad\quad+O\left(\sum_{\ell=-\sqrt{k(N)}/2}^{\sqrt{k(N)}/2}\binom{k(N)}{k_{1,\ell}}N^{\delta\left(1-\frac{1}{\kappa-1}+\epsilon'\right)}\right).
\end{align}
We know that
\begin{align}\label{remove truncation from truncated main term for binomial sum over all intervals}
\sum_{\ell=-\sqrt{k(N)}/2}^{\sqrt{k(N)}/2}\binom{k(N)}{k_{1,\ell}} &\ = \ \sum_{\ell=-k(N)/(2N^{\delta})}^{k(N)/(2N^{\delta})}\binom{k(N)}{k_{1,\ell}}+O\left(\sum_{|\ell|>\sqrt{k(N)}/2}\binom{k(N)}{k_{1,\ell}}\right) \nonumber\\
&\ = \ \sum_{\ell=-k(N)/(2N^{\delta})}^{k(N)/(2N^{\delta})}\binom{k(N)}{k_{1,\ell}}+O\left(\frac{2^{k(N)}}{N^{2\delta}}\right).
\end{align}
By (5.17) of \cite{dependent}, by choosing $q=N^\delta$, we have 
\begin{align}\label{application of multisection formula to binomial sum}
\sum_{\ell=-k(N)/(2N^{\delta})}^{k(N)/(2N^{\delta})}\binom{k(N)}{k_{1,\ell}} \ = \ \frac{2^{k(N)}}{N^{\delta}}+O\left(2^{k(N)}e^{-3k(N)^{1-2\delta}}\right)
\end{align}
Substitute \eqref{application of multisection formula to binomial sum} into \eqref{remove truncation from truncated main term for binomial sum over all intervals} and then substitute the resulting formula into \eqref{truncated main term for binomial sum over all intervals with error}, we have
\begin{align}\label{main term for binomial sum over all intervals}
\sum_{\ell=-\sqrt{k(N)}/2}^{\sqrt{k(N)}/2}\sum_{i\in J_\ell(a,b)}\binom{k(N)}{k_{1,\ell,i}} &\ = \ (b-a)2^{k(N)}+O\left(2^{k(N)}N^{-\delta}+2^{k(N)}N^{\delta\left(-\frac{1}{\kappa-1}+\epsilon'\right)}\right) \nonumber\\
 &\ = \ (b-a)2^{k(N)}+O\left(2^{k(N)}N^{\delta\left(-\frac{1}{\kappa-1}+\epsilon'\right)}\right),
\end{align}
where the last line follows from $-1<-1/(\kappa-1)+\epsilon'<0$. Substituting \eqref{truncated error term for binomial sum over all intervals} and \eqref{main term for binomial sum over all intervals} into \eqref{binomial sum over all intervals} yields
\begin{align}\label{final estimate for binomial sum over all intervals}
\sum_{\ell=-k(N)/(2N^\delta)}^{k(N)/(2N^\delta)}\sum_{i\in J_\ell(a,b)}\binom{k(N)}{k_{1,\ell,i}} \ = \ (b-a)2^{k(N)}+O\left(2^{k(N)}N^{\delta\left(-\frac{1}{\kappa-1}+\epsilon'\right)}\right).
\end{align}
Finally, we sum over all $k(N),k_3,k_4,\dots,k_m$, which gives us the total number of lengths $A^{(N)}_{k_1,k_2,\dots, k_m}$ such that $\log_{B}(A^{(N)}_{k_1,k_2,\dots, k_m})$ mod 1 is in $(a,b)$. We break the sum based on whether $k(N)\geq N^\epsilon$ or not. For $k(N)\geq N^\epsilon$, we apply \eqref{final estimate for binomial sum over all intervals}:

\begin{align}\label{main term for multinomial sum}
&\sum_{\substack{0\leq k(N),k_3,k_4,\dots,k_m\leq N \\ k(N)+k_3+k_4+\cdots+k_m=N \\ k(N)\geq N^\epsilon}}\binom{N}{k(N),k_3,k_4,\dots,k_m}\sum_{\ell=-k(N)/(2N^\delta)}^{k(N)/(2N^\delta)}\sum_{i\in J_\ell(a,b)}\binom{k(N)}{k_{1,\ell,i}} \nonumber\\
&\ = \ (b-a)\sum_{\substack{0\leq k(N),k_3,k_4,\dots,k_m\leq N \\ k(N)+k_3+k_4+\cdots+k_m=N \\ k(N)\geq N^\epsilon}}2^{k(N)}\binom{N}{k(N),k_3,k_4,\dots,k_m} \nonumber\\
&\quad\quad+O\left(N^{\delta\left(-\frac{1}{\kappa-1}+\epsilon'\right)}\sum_{\substack{0\leq k(N),k_3,k_4,\dots,k_m\leq N \\ k(N)+k_3+k_4+\cdots+k_m=N \\ k(N)\geq N^\epsilon}}2^{k(N)}\binom{N}{k(N),k_3,k_4,\dots,k_m}\right).
\end{align}
Note that by Lemma \ref{multinomial identity II}
\begin{align}\label{difference between two estimate on multinomial sum}
&\sum_{\substack{0\leq k(N),k_3,k_4,\dots,k_m\leq N \\ k(N)+k_3+k_4+\cdots+k_m=N \\ k(N)\geq N^\epsilon}}2^{k(N)}\binom{N}{k(N),k_3,k_4,\dots,k_m} \nonumber\\
&\ = \ m^N-\sum_{\substack{0\leq k_1,k_2,\dots, k_m\leq N \\ k_1+k_2+\cdots+k_m=N \\ k_1+k_2<N^\epsilon}}\sum_{k_1=0}^{k_1+k_2}\binom{N}{k_1,k_2,\dots, k_m} \nonumber\\
&\ = \ m^N+O((m-2)^N N^{N^\epsilon+2}),
\end{align}
where in the last line, we apply Proposition \ref{multinomial sum tail bound}. Substituting \eqref{difference between two estimate on multinomial sum} into \eqref{main term for multinomial sum} gives
\begin{align}\label{main term of multinomial sum final estimate}
&\sum_{\substack{0\leq k(N),k_3,k_4,\dots,k_m\leq N \\ k(N)+k_3+k_4+\cdots+k_m=N \\ k(N)\geq N^\epsilon}}\binom{N}{k(N),k_3,k_4,\dots,k_m}\sum_{\ell=-k(N)/(2N^\delta)}^{k(N)/(2N^\delta)}\sum_{i\in J_\ell(a,b)}\binom{k(N)}{k_{1,\ell,i}} \nonumber\\
&\ = \ (b-a)m^N+O\left((m-2)^NN^{N^{\epsilon}+2}+m^NN^{\delta\left(-\frac{1}{\kappa-1}+\epsilon'\right)}+(m-2)^NN^{N^{\epsilon}+2}N^{\delta\left(-\frac{1}{\kappa-1}+\epsilon'\right)}\right) \nonumber\\
&\ = \ (b-a)m^N+O\left(m^NN^{\delta\left(-\frac{1}{\kappa-1}+\epsilon'\right)}\right).
\end{align}
For $k(N)<N^\epsilon$, by Lemma \ref{multinomial identity I} and Proposition \ref{multinomial sum tail bound} we have
\begin{align}\label{error term of multinomial sum} 
&\sum_{\substack{0\leq k(N),k_3,k_4,\dots,k_m\leq N \\ k(N)+k_3+k_4+\cdots+k_m=N \\ k(N)< N^\epsilon}}\binom{N}{k(N),k_3,k_4,\dots,k_m}\sum_{\ell=-k(N)/(2N^\delta)}^{k(N)/(2N^\delta)}\sum_{i\in J_\ell(a,b)}\binom{k(N)}{k_{1,\ell,i}} \nonumber\\
&\ \leq \ \sum_{\substack{0\leq k(N),k_3,k_4,\dots,k_m\leq N \\ k(N)+k_3+k_4+\cdots+k_m=N \\ k(N)< N^\epsilon}}\binom{N}{k(N),k_3,k_4,\dots,k_m}\sum_{k_1=0}^{k(N)}\binom{k(N)}{k_1} \nonumber\\
&\ = \ \sum_{\substack{0\leq k_1,k_2,\dots,k_m\leq N \\ k_1+k_2+\cdots+k_m=N \\ k_1+k_2< N^\epsilon}}\binom{N}{k_1,k_2,\dots,k_m} \ = \ O\left((m-2)^NN^{N^{\epsilon}+2}\right).
\end{align}
Combining \eqref{main term of multinomial sum final estimate} and \eqref{error term of multinomial sum}, we have
\begin{align}
&\sum_{\substack{0\leq k(N),k_3,k_4,\dots,k_m\leq N \\ k(N)+k_3+k_4+\cdots+k_m=N}}\binom{N}{k(N),k_3,k_4,\dots,k_m}\sum_{\ell=-k(N)/(2N^\delta)}^{k(N)/(2N^\delta)}\sum_{i\in J_\ell(a,b)}\binom{k(N)}{k_{1,\ell,i}} \nonumber\\
&\ = \ (b-a)m^N+O\left(N^{\delta\left(-\frac{1}{\kappa-1}+\epsilon'\right)}m^N+N^{N^\epsilon+2}(m-2)^N\right) \nonumber\\
&\ = \ (b-a)m^N+O\left(N^{\delta\left(-\frac{1}{\kappa-1}+\epsilon'\right)}m^N\right),
\end{align}
After dividing by $m^N$, the total number of sticks after stage $N$, we arrive at
\begin{align}\label{stick model final estimate w power saving}
F_N(a,b) \ = \ b-a+O\left(N^{\delta\left(-\frac{1}{\kappa-1}+\epsilon'\right)}\right).
\end{align}

Thus, we have shown that if $\log_B(p_1/p_2)$ has a finite irrationality exponent $\kappa$, then the logarithm of the stick lengths is equidistributed mod 1, and thus the stick lengths converge to the Benford distribution. Moreover, the discrepancy between the distribution of the logarithm of the stick length and uniform distribution mod 1 is quantified in terms of the irrationality exponent of $\log_B(p_1/p_2)$. Equation \eqref{stick model final estimate w power saving} tells us that the smaller the irrationality exponent, the smaller the discrepancy is (thanks to the guarantee that the error term is optimal), and the closer the distribution of the stick length is to the Benford distribution. If $\log(p_1/p_2)$ has infinite irrationality exponent, the calculation is exactly the same, except that we start with the error term $o(N^\delta)$ instead of $O(N^{\delta\left(1-\frac{1}{\kappa-1}+\epsilon'\right)})$ in Section \ref{stick model equidistribution subsection}, and so we end up with the $F_N(a,b) \ = \ b-a+o(1)$.

Thus, this establishes the strong Benford behavior of fixed multi-proportion 1-dimensional stick fragmentation process when $y_i\not\in\Q$ for some $1\leq i\leq m-1$. On a final note, recall from Section \ref{prelim subsection} our choices of the order of $p_1, p_2, \dots, p_m$ and thus factorization are arbitrary. Hence, to obtain an optimal error term, we simply need to choose an order of $p_1, p_2, \dots, p_m$ such that $\log_B(p_1/p_2)$ is irrational with the least irrationality exponent. Let $\kappa_0$ be such irrationality exponent. Then we have
\begin{align}
F_N(a,b) \ = \ b-a+O\left(N^{\delta\left(-\frac{1}{\kappa_0-1}+\epsilon'\right)}\right).
\end{align}
\qed

\section{Proof of Theorem \ref{box fragmentation conj answer arbitrary dim}}\label{higher dimension}

\subsection{Preliminary}\label{prelim of proof of box fragmentation theorem}
In this section, we prove Theorem \ref{box fragmentation conj answer arbitrary dim}, i.e., Conjecture 4.1 of \cite{box fragmentation}, under some conditions. Let us first recall the conjecture.
\begin{conj: box fragmentation conjecture}
Every linear fragmentation process satisfies the Maximum Criterion in all dimensions $1\leq d\leq m$.
\end{conj: box fragmentation conjecture}

Betti et al. \cite{box fragmentation} were able to prove the Conjecture for any arbitrary $m\geq 1$ and $d=1$. That is, they showed that under any linear fragmentation process, the length of the longest side of a fragmented box of any arbitrary dimension converges to strong Benford behavior. For the rest of this paper, we prove Theorem \ref{box fragmentation conj answer arbitrary dim} for any $m\geq 1$ and $d\leq m$, i.e., we consider the volume of the largest $d$-dimensional face of a fragmented box of any arbitrary dimension $m\geq d$ converges to strong Benford behavior. Our proof relies on the following conditions on the distribution of $P_i^{(j)}$:
\begin{enumerate}
\item Condition 1: $P_i^{(j)}$'s are i.i.d. for all $1\leq i\leq m$ and $1\leq j\leq N$. That is, $P_i^{(j)}$'s are all independent and there exists a random variable $P$ such that $P_i^{(j)}\sim P$ for all $i,j$;
\item Condition 2: Let the mean and the variance of $\log_B(P)$ be $\mu_P$ and $\sigma_P$. Then $\log_B(P)$ has finite centered moments, i.e., $\mathbb{E}[(\log_B(P)-\mu_P)^k]<\infty$ for all $k\geq 1$ and the characteristic function of $(\log_B(P)-\mu_P)/\sigma_P$ admits Taylor expansion. Moreover, the variance is nonzero;
\item Condition 3: Let $f(x)$ be the probability density function of $\log_B(P)$, then we have $\textup{ess sup}_x f(x)<\infty$;
\item Condition 4: $f(x)$ is differentiable with $\textup{sup}_x |f'(x)|<\infty$.
\end{enumerate}

These conditions ensure that we are working with ``nice'' proportion cuts and are satisfied by a wide range of random variables. Moreover, we believe that Theorem \ref{box fragmentation conj answer arbitrary dim} should hold in a far more general setting, particular when the $P_i^{(j)}$'s have independent but non-identical distributions. For example, as long as $\{P_i^{(j)}\}_{j=1}^N$ satisfies the conditions for CLT for every $i$, then the distribution of the normalized side lengths of the box are still approximately Gaussian, and the proof techniques developed in this paper should extend without major difficulty. The only technical caveat is perhaps that we would need to use \cite{Vaughan} instead of \eqref{order statistics joint PDF formula} to obtain a formula for the volume of the largest lower dimensional face of the box, since the side lengths are no longer i.i.d.. The formula in \cite{Vaughan} is rather complicated--it involves the permanent of a matrix whose entries are simple expressions in terms of the PDF and the CDF of random variables. However, after permanent expansion, it is essentially a linear sum with terms resembling \eqref{order statistics joint PDF formula}. Hence, we believe that our analysis should extend to the general setting where $P_i^{(j)}$ are independent but non-identical without too much trouble, but for the clarity of exposition, we decide to restrict ourselves to the i.i.d. setting.

Under these conditions, our goal is then to prove that the volume $\mathfrak{m}_d^{(N)}$ of the largest $d$-dimensional face of a $m$-dimensional fragmented box converges to strong Benford behavior. Once we prove this, then by the Maximum Criterion, the volume $V_d^{(N)}$ of the $d$-dimensional faces of a $m$-dimensional fragmented box also converges to strong Benford behavior. To begin, let us first introduce the notion of order statistics.

\begin{definition}
Suppose that $X_1,X_2,\dots, X_k$ are random variables, and $[]_i$ returns the $i$\textsuperscript{th} largest number among a list $(a_1, a_2, \dots, a_k)$ of real numbers. For each outcome $\omega$ in the sample space $ \Omega$, we define
\begin{align}
X_{(k-i+1)}(w) \ := \ [(X_1(w), X_2(w),\dots, X_k(w))]_{k-i+1}.
\end{align}
We say that the random variable $X_{(k-i+1)}$ is \textbf{the $i$\textsuperscript{th} order statistics}, or the $i$\textsuperscript{th} largest random variable among $X_1, X_2, \dots, X_k$.
Hence, as random variables, $X_{(1)}\leq X_{(2)}\leq\dots\leq X_{(k)}$.
\end{definition}
Let $S_1^{(N)}, \cdots, S_m^{(N)}$ be the side lengths of the $m$-dimensional box after stage $N$. Then the volume of the largest $d$-dimensional face is the product of the longest $d$ side lengths of the box, i.e., $\mathfrak{m}_d^{(N)}=\prod_{i=1}^{d}S_{(m+1-i)}^{(N)}$. Since $S_i^{(N)}=P_i^{(1)}\cdots P_i^{(N)}$ for all $i$ and the $P_i^{(j)}$'s are between $0$ and $1$, then $\mathfrak{m}_d^{(N)}$ is asymptotically very small. To make this quantity more convenient to work with, we normalize it as follows. By Condition 1, $P_i^{(j)}$'s are i.i.d., which means that $S_i^{(N)}$'s are i.i.d. as well. Since $\mathbb{E}[\log_B(P)]=\mu_P$ and $\textup{Var}[\log_B(P)]=\sigma_P^2$, then for all $i$,
\begin{align}\label{log side length expected value and variance}
\mathbb{E}[\log_B(S_i^{(N)})] \ = \ N\mu_P\quad \textup{and} \quad \textup{Var}[\log_B(S_i^{(N)})] \ = \ N\sigma_P^2.
\end{align}
Now consider
\begin{align}\label{normalized log side length}
Z_i^{(N)} \ := \ \frac{\log_B(S_i^{(N)})-N\mu_P}{\sqrt{N}\sigma_P}.
\end{align}
It is a well-known fact from order statistics that if the order statistics of $(X_1, \dots, X_k)$ are $(X_{(1)}, \dots, X_{(k)})$, then the order statistics of $(g(X_1), \dots, g(X_k))$ are $(g(X_{(1)}), \dots, g(X_{(k)}))$ for any monotonically increasing function $g$. Since the order statistics of $(S_1^{(N)}, \dots, S_m^{(N)})$ are $(S_{(1)}^{(N)}, \dots, S_{(m)}^{(N)})$ and $g(x):=(\log_B(x)-N\mu_P)/\sqrt{N}\sigma_P$ is monotonically increasing, then it follows that the order statistics of $(g(S_1^{(N)}), \dots, g(S_m^{(N)}))$ are $(g(S_{(1)}^{(N)}), \dots, g(S_{(m)}^{(N)}))$, which are just $(Z^{(N)}_{(1)}, \dots, Z^{(N)}_{(m)})$. We are now ready to define the normalized form of $\mathfrak{m}_d^{(N)}$ with which we shall use to establish strong Benford behavior:
\begin{align}\label{normalized log max volume}
Y^{(N)} \ := \ \frac{\log_B(\mathfrak{m}_d^{(N)})-d\cdot N\mu_P}{\sqrt{N}\sigma_P} \ = \ \sum_{i=1}^{d}\frac{\log_B(S^{(N)}_{(m+1-i)})-N\mu_P}{\sqrt{N}\sigma_P} \ = \ \sum_{i=1}^{d}Z_{(m+1-i)}^{(N)}.
\end{align}
To find the PDF of $Y^{(N)}$, we need the joint PDF of $Z_{(m+1-i)}^{(N)}$ for all $1\leq i\leq d$, which is a standard question in order statistics.
\begin{prop}\cite{orderstats}\label{order statistics joint PDF proposition}
For i.i.d. random variables $X_1, \dots, X_n$ with PDF $f(x)$ and CDF $F(x)$, the joint PDF of $X_{(i)},\dots, X_{(n)}$ is given by
\begin{align}\label{order statistics joint PDF formula}
f_{X_{(i)},\dots, X_{(n)}}(x_i,\dots, x_n) \ = \ C_n^i f(x_i)\cdots f(x_n)(F(x_i))^{i-1},
\end{align}
where $1\leq i\leq n$ and $C_n^i:=n!/(i-1)!$.
\end{prop}

Since $Z_{i}^{(N)}$'s are i.i.d., then to find the joint PDF of $Z_{(m+-i)}^{(N)}$, it suffices to find the PDF and CDF of $Z_{i}^{(N)}$'s, which converge to that of the standard Gaussian distribution by CLT. However, to gain a sharp estimate on the distribution of $Y^{(N)}$, we would need to quantify the error terms of the PDF and the CDF, as follows.

To begin, we prove the following Riemann-Lebesgue lemma type bound on the characteristic function of a random variable.
\begin{lemma}\label{estimate for characteristic function}
Suppose that $f(x)$ is the PDF of a random variable $X$, where $f(x)$ is differentiable with $\sup_{x\in \R}|f'(x)|<\infty$ and $X$ has finite mean and nonzero, finite variance. Then the characteristic function
\begin{align}
\psi(t) \ := \ \int_{-\infty}^\infty f(x)e^{itx}dx
\end{align}
satisfies $\psi(t) = O(t^{-2/5})$.
\end{lemma}
\begin{proof}
We know that $e^{itx}=\cos(tx)+i\sin(tx)$. Hence, it suffices to show that
\begin{align}
\int_{-\infty}^\infty f(x)\cos(tx)dx \ = \ O(t^{-2/5}) \quad \textup{and} \quad \int_{-\infty}^\infty f(x)\sin(tx)dx \ = \ O(t^{-2/5}).
\end{align}
The proof for the two bounds are similar, and we only show it for $\int_{-\infty}^\infty f(x)\cos(tx)dx$. We first estimate the tail of the integral.
\begin{align}\label{estimate of tail of characteristic function}
\left|\int_{|x|\geq t^{1/5}}f(x)\cos(tx)dx\right| &\ \leq \ \int_{|x|\geq t^{1/5}}f(x)dx \nonumber\\
&\ = \ \mathbb{P}\left(|X|\geq t^{1/5}\right) \nonumber\\
&\ \leq \ \mathbb{P}\left(|X-\mathbb{E}[X]|\geq t^{1/5}-\mathbb{E}[X]\right)+\mathbb{P}\left(|X-\mathbb{E}[X]|\geq t^{1/5}+\mathbb{E}[X]\right) \nonumber\\
&\ \leq \ \frac{\textup{Var}[X]}{(t^{1/5}-\mathbb{E}[X])^2}+\frac{\textup{Var}[X]}{(t^{1/5}+\mathbb{E}[X])^2} \ = \ O(t^{-2/5}),
\end{align}
where the last line follows from Chebyshev's inequality, which requires $X$ to have finite mean and nonzero, finite variance. Hence, we can restrict the domain of integration to $[-t^{1/5},t^{1/5}]$. We know that if $g(x)$ is differentiable on $[a,b]$ with $\sup_{x\in [a,b]}|g'(x)|=M<\infty$ \cite{G}, then for any Riemann sum $S=\sum_{k=1}^n g(x_k^{*})(x_k-x_{k-1})$,
\begin{align}
\left|\int_{-\infty}^\infty g(x)dx-S\right| \ \leq \ \frac{M(b-a)\delta_{\textup{max}}}{2},
\end{align}
where $\delta_{max}=\max_{k}(x_k-x_{k-1})$. Now consider a Riemann sum $S_t=\sum_{k=1}^nf(x_k^{*})(x_k-x_{k-1})$ with the partition $x_{k}-x_{k-1}=t^{-3/5}$ for all $k$. Define the function $f_t$ on $[-t^{1/5},t^{1/5}]$ such that $f_t(x):=f(x_k^{*})$ for all $x\in [x_{k-1},x_k)$ and for every $k$. Since $f$ is nonnegative, then $f_t$ is also nonnegative. Hence, 
\begin{align}\label{estimate of difference between characteristic function and Riemann sum}
\left|\int_{-t^{1/5}}^{t^{1/5}}f(x)\cos(tx)dx-\int_{-t^{1/5}}^{t^{1/5}}f_t(x)\cos(tx)dx\right| &\ \leq \ \left|\int_{-t^{1/5}}^{t^{1/5}}f(x)dx-S_t\right| \nonumber\\
&\ \leq \ \frac{M_t\cdot (2t^{1/5})\cdot t^{-3/5}}{2} \ = \ O(t^{-2/5}),
\end{align}
where $M_t:=\sup_{x\in [-t^{1/5},t^{1/5}]}|f'(x)|<\infty$. Moreover, we have
\begin{align}
\int_{-t^{1/5}}^{t^{1/5}}f_t(x)\cos(tx)dx &\ = \ \int_{-t^{1/5}}^{t^{1/5}}\sum_{k=1}^n f(x_k^{*})\chi_{[x_{k-1},x_k)}(x)\cos(tx)dx \nonumber\\
&\ = \ \sum_{k=1}^n f(x_k^{*})\int_{x_{k-1}}^{x_k}\cos(tx)dx \nonumber\\
&\ = \ \frac{1}{t}\sum_{k=1}^n f(x_k^{*})(\sin(tx_{k})-\sin(tx_{k-1})) \nonumber\\
&\ = \ O\left(\frac{1}{t}\sum_{k=1}^n f(x_k^{*})\right).
\end{align}
Since $S_t=t^{-3/5}\sum_{k=1}^n f(x_k^{*})$ and that the Riemann sum is finite for large $t$, then
\begin{align}\label{estimate of Riemann sum}
 \int_{-t^{1/5}}^{t^{1/5}}f_t(x)\cos(tx)dx &\ = \  O(t^{-2/5}).  
\end{align}
Thus, combining \eqref{estimate of tail of characteristic function}, \eqref{estimate of difference between characteristic function and Riemann sum}, and \eqref{estimate of Riemann sum}, we have
\begin{align}
&\left|\int_{-\infty}^\infty f(x)\cos(tx)dx\right| \nonumber\\
&\ \leq \ \left|\int_{|x|\geq t^{1/5}} f(x)\cos(tx)dx\right|+\left|\int_{-t^{1/5}}^{t^1/5}f(x)\cos(tx)dx-\int_{-t^{1/5}}^{t^{1/5}}f_t(x)\cos(tx)dx\right| \nonumber\\
&\quad\quad +\left|\int_{-t^{1/5}}^{t^{1/5}}f_t(x)\cos(tx)dx\right| \ = \ O(t^{-2/5}).
\end{align}
\end{proof}

\begin{lemma}\label{PDF and CDF estimates for normalized log side length}
Let $Z_i^{(N)}\sim Z^{(N)}$. For any $0<\epsilon<1/2$, the PDF $f_{Z^{(N)}}$ and the CDF $F_{Z^{(N)}}$ of $Z^{(N)}$ satisfy
\begin{align}\label{PDF estimate for normalized log side length}
f_{Z^{(N)}}(z) \ = \ \varphi(z)+O(N^{-3/2+3\epsilon})
\end{align}
\begin{align}
F_{Z^{(N)}}(z) \ = \ \Phi(z)+O(N^{-1/2}),
\end{align}
where $\varphi$ and $\Phi$ are the PDF and the CDF of the standard Gaussian distribution.
\end{lemma}
\begin{proof}
We first prove \eqref{PDF estimate for normalized log side length}. Let the characteristic function of $W:=(\log_B(P)-\mu_P)/\sigma_P$ be $\psi_W(t)$, which has mean $0$, variance $1$, and finite moments by Condition 2. Hence,
\begin{align}
\psi_W(t) &\ = \ \sum_{k=0}^\infty \frac{\mathbb{E}[W^k](it)^k}{k!} \nonumber\\
&\ = \ 1-\frac{t^2}{2}+O(t^3)
\end{align}
for sufficiently small $t$. Let the characteristic function of $Z^{(N)}$ be $\psi_{Z^{(N)}}(t)$. By properties of characteristic functions,
\begin{align}\label{characteristic function of normalized log side length}
\psi_{Z^{(N)}}(t) &\ = \ (\psi_{W}(t/\sqrt{N}))^N \nonumber\\
&\ = \ \left(1-\frac{t^2}{2N}+O\left(\frac{t^3}{N^{3/2}}\right)\right)^N \nonumber\\
&\ = \ \textup{exp}\left[N\log\left(1-\frac{t^2}{2N}+O\left(\frac{t^3}{N^{3/2}}\right)\right)\right] \nonumber\\
&\ = \ \textup{exp}\left[N\left(\log\left(1-\frac{t^2}{2N}\right)+O\left(\frac{1}{1-t^3/N^{3/2}}\cdot\frac{t^3}{N^{3/2}}\right)\right)\right] \nonumber\\
&\ = \ \textup{exp}\left[N\left(-\frac{t^2}{2N}+O\left(\frac{t^4}{N^2}\right)+O\left(\frac{t^3}{N^{3/2}}\right)\right)\right] \nonumber\\
&\ = \ e^{-t^2/2}\left(1+O\left(\frac{t^3}{N^{3/2}}\right)\right),
\end{align}
where in the last two lines, we make use of the Taylor series expansions of $\log(1+x)$ and $e^x$. Note that the above estimate holds for $|t|\leq N^\epsilon$ for any $0<\epsilon<1/2$. By Fourier inversion, the PDF of $Z^{(N)}$ is
\begin{align}
\frac{1}{2\pi}\int_{-\infty}^\infty\psi_{Z^{(N)}}(t) e^{-itx}dt.
\end{align}
We shall split the integral above into several parts and provide an estimate for each. We first estimate the integral for $|t|\leq N^\epsilon$. By \eqref{characteristic function of normalized log side length},
\begin{align}\label{Fourier inversion in small regime}
\frac{1}{2\pi}\int_{|t|\leq N^\epsilon}\psi_{Z^{(N)}}(t)e^{-itx}dt \ = \ \varphi(x)+O(N^{-3/2+3\epsilon}).
\end{align}

By Lemma \ref{estimate for characteristic function}, we know that $\psi_W(t)=O(t^{-2/5})$. Let $E$ be a constant such that $E\geq (\pi/4)^{2/5}$ and $|\psi_W(t)|\leq Et^{-2/5}$ for sufficiently large $t$. Then, for any $\epsilon_0>0$, we have
\begin{align}\label{Fourier inversion in tail regime}
\left|\frac{1}{2\pi}\int_{|t|\geq (E+\epsilon_0)^{5/2}\sqrt{N}}\psi_{Z^{(N)}}(t)e^{-itx}dt\right| &\ \ll \ \int_{t\geq (E+\epsilon_0)^{5/2}\sqrt{N}}\left(\frac{E}{(t/\sqrt{N})^{2/5}}\right)^Ndt \nonumber\\
&\quad \ \ll \ \sqrt{N}E^N\int_{t\geq (E+\epsilon_0)^{5/2}}t^{-2N/5}dt \nonumber\\
&\quad \ \ll \ \frac{\sqrt{N}}{N}\cdot\frac{E^N}{(E+\epsilon_0)^N},
\end{align}
which is of exponential decay in $N$. By \cite{characteristic estimate} and Condition 3, for $|t|\geq \pi/4$, we have
\begin{align}
|\psi_W(t)| \ < \ 1-\frac{c_1}{m^2},
\end{align}
where $m$ is the median of $W$'s which is finite by Condition 3, and $c_1>0$ is some constant. Hence,
\begin{align}\label{Fourier inversion in constant regime}
\left|\frac{1}{2\pi}\int_{\pi\sqrt{N}/4\leq |t|\leq (E+\epsilon_0)^{5/2}\sqrt{N}}\psi_{Z^{(N)}}(t)e^{-itx}dt\right| \ \ll \ \sqrt{N}\left(1-\frac{c_1}{m^2}\right)^N,
\end{align}
which is also of exponential decay in $N$. By \cite{characteristic estimate}, when $0<t<\pi/4$, we have
\begin{align}
|\psi_W(t)| \ < \ 1-\frac{c_2t^2}{m},
\end{align}
where $m$ is again the median of $W_i$'s and $c_2>0$ is some constant. Hence,
\begin{align}\label{Fourier inversion in intermediate regime}
\left|\frac{1}{2\pi}\int_{N^\epsilon\leq |t|\leq \pi\sqrt{N}/4}\psi_{Z^{(N)}}(t)e^{-itx}dt\right| &\ \ll \ \sqrt{N}\left(1-\frac{c_2}{m^2}N^{2\epsilon-1}\right)^N \nonumber\\
&\ \ll \ \sqrt{N}e^{-2c_2N^{2\epsilon}/m^2},
\end{align}
which also decays exponential in $N$. Combining \eqref{Fourier inversion in small regime}, \eqref{Fourier inversion in tail regime}, \eqref{Fourier inversion in constant regime}, \eqref{Fourier inversion in intermediate regime}, we have
\begin{align}
f_{Z^{(N)}}(x) \ = \ \frac{1}{2\pi}\int_{-\infty}^\infty \psi_{Z^{(N)}}(t)e^{-itx}dt \ = \ \varphi(x)+O(N^{-3/2+3\epsilon}),
\end{align}
which is the desired PDF for $Z^{(N)}$. Moreover, by Berry-Esseen theorem \cite{Berry, E}, we have that for all $N$ and $x$,
\begin{align}
F_{Z^{(N)}}(x) \ = \ \Phi(x)+O(N^{-1/2}),
\end{align}
which is the desired estimate for the CDF of $Z^{(N)}$.
\end{proof}
As we shall see for the rest of this section, it suffices for the error term of $f_{Z^{(N)}}$ to be $O(N^{-1/2-\delta})$ and the error term of $F_{Z^{(N)}}$ to be $O(N^{-\delta})$ for some $\delta> 0$. Hence, the estimates of Lemma \ref{PDF and CDF estimates for normalized log side length} are more than enough for this purpose. Now, we go back to \eqref{order statistics joint PDF formula} given in Proposition \ref{order statistics joint PDF proposition}. In our case, the joint PDF of $Z_{(m+1-i)}^{(N)}$ for all $1\leq i\leq d$ is
\begin{align}
&f_{Z^{(N)}_{(m+1-d)},\dots, Z^{(N)}_{(m)}}(z_{m+1-d},\dots, z_{m}) \nonumber\\ 
&\quad \ = \ C_m^{m+1-d}\left(\Phi(z_{m+1-d})+B(z_{m+1-d})\right)^{m-d}\prod_{i=1}^d \left(\varphi(z_{m+i-d})+A(z_{m+i-d})\right),
\end{align}
where $A(x)=O(N^{-1/2-\delta})$ and $B(x)=O(N^{-\delta})$. By binomial expansion,
\begin{align}\label{binomial expansion of PDF power}
\left(\Phi(z_{m+1-d})+B(z_{m+1-d})\right)^{m-d} \ = \ \sum_{j=0}^{m-d}\binom{m-d}{j}(\Phi(z_{m+1-d}))^{m-d-j}(B(z_{m+1-d}))^{j}.
\end{align}
Since $\Phi(x)$ is a CDF, then $0\leq \Phi(x)\leq 1$, a fact which we shall use without explicit mention for the remainder of the paper. Then, any term in \eqref{binomial expansion of PDF power} with $j>1$ is at most of the same order as $B(z_{m+1-d})$. Hence,
\begin{align}
\left(\Phi(z_{m+1-d})+B(z_{m+1-d})\right)^{m-d} \ = \ \left(\Phi(z_{m+1-d})\right)^{m-d}+O(B(z_{m+1-d})).
\end{align}
We make a slight abuse of notation here to write the error term above as $B(z_{m+1-d})$, which does not change the order of the error term. So then we have
\begin{align}\label{joint PDF of order statistics of normalized log side length}
&f_{Z^{(N)}_{(m+1-d)},\dots, Z^{(N)}_{(m)}}(z_{m+1-d},\dots, z_{m}) \nonumber\\ 
&\quad \ = \ C_m^{m+1-d}\left(\Phi(z_{m+1-d})^{m-d}+B(z_{m+1-d})\right)\prod_{i=1}^d \left(\varphi(z_{m+i-d})+A(z_{m+i-d})\right).    
\end{align}

\subsection{PDF of $Y^{(N)}$}\label{section: Probability density function} In this section, we find the PDF of $Y^{(N)}$. The CDF $F_{Y^{(N)}}(y)$ of $Y^{(N)}$ is given by integrating over the appropriate region in $\R^d$, that is, over all the values $(z_{m+1-d},\dots, z_m)$ of $(Z_{(m+1-d)}^{(N)},\dots, Z_{(m)}^{(N)})$ that sums to $y$. We know that $Z_{(m+1-d)}^{(N)},\dots Z_{(m)}^{(N)}$ are not independent, i.e., they have to satisfy $Z_{(m+1-d)}^{(N)}\leq \cdots \leq Z_{(m)}^{(N)}$. In general, we know that any $1\leq j\leq d$, $z_{m+j-d}$ can take value from $z_{m+(j-1)-d}$ to $(y-\sum_{i=1}^{j-1}z_{m+i-d})/(d-j+1)$. The only exception is $z_{m+1-d}$, whose upper bound is already correctly stated but the lower bound is $-\infty$. Hence, we have
\begin{align}
&F_{Y^{(N)}}(y) \nonumber\\
&\quad \ = \ \int_{-\infty}^{\frac{y}{d}}\prod_{j=2}^{d}\int_{z_{m+(j-1)-d}}^{\frac{y-\sum_{i=1}^{j-1}z_{m+i-d}}{d-j+1}} f_{Z^{(N)}_{(m+1-d)},\dots, Z^{(N)}_{(m)}}(z_{m+1-d},\dots, z_m) dz_m\cdots dz_{m+1-d},
\end{align}
where we use the product of integrals to denote
\begin{align}
\prod_{j=k}^{d}\int_{z_j}^{y_j} \ := \ \int_{z_k}^{y_k}\cdots \int_{z_j}^{y_j}\cdots \int_{z_d}^{y_d}.
\end{align}
We separate out the main term $C_m^{m+1-d}\Phi(z_{m+1-d})^{m-d}\prod_{i=1}^d \varphi(z_{m+i-d})$ of the joint PDF \eqref{joint PDF of order statistics of normalized log side length} and denote it by 
\begin{align}\label{main term of joint PDF of order statistics of normalized log side length}
f_{Z^{(N)}_{(m+1-d)}, \dots, Z^{(N)}_{(m)}, \textup{ main}}(z_{m+1-d},\dots, z_m) \ := \ C_m^{m+1-d}\Phi(z_{m+1-d})^{m-d}\prod_{i=1}^d \varphi(z_{m+i-d}).
\end{align}
We also denote the error term of the joint PDF \eqref{joint PDF of order statistics of normalized log side length} by 
$f_{Z^{(N)}_{(m+1-d)}, \dots, Z^{(N)}_{(m)}, \textup{ error}}(z_{m+1-d},\dots, z_m)$. Let $F_{Y^{(N)}, \textup{ main}}(y)$ and $F_{Y^{(N)}, \textup{ error}}(y)$ be the main term and the error term of $F_{Y^{(N)}}(y)$. Then
\begin{align}\label{CDF and main and error terms of normalized max volume}
F_{Y^{(N)}}(y) &\ = \ F_{Y^{(N)}, \textup{ main}}(y)+F_{Y^{(N)}, \textup{ error}}(y) \nonumber\\
F_{Y^{(N)}, \textup{ main}}(y) &\ = \ \int_{-\infty}^{\frac{y}{d}}\prod_{j=2}^{d}\int_{z_{m+(j-1)-d}}^{\frac{y-\sum_{i=1}^{j-1}z_{m+i-d}}{d-j+1}}f_{Z^{(N)}_{(m+1-d)}, \dots, Z^{(N)}_{(m)}, \textup{ main}}(z_{m+1-d},\dots, z_m)dz_m\cdots dz_{m+1-d} \nonumber\\
F_{Y^{(N)}, \textup{ error}}(y) &\ = \ \int_{-\infty}
^{\frac{y}{d}}\prod_{j=2}^d\int_{z_{m+(j-1)-d}}^{\frac{y-\sum_{i=1}^{j-1}z_{m+i-d}}{d-j+1}}f_{Z^{(N)}_{(m+1-d)}, \dots, Z^{(N)}_{(m)}, \textup{ error}}(z_{m+1-d},\dots, z_m)dz_m\cdots dz_{m+1-d}.
\end{align}

We are now ready to find the PDF $f_{Y^{(N)}}$ of $Y^{(N)}$. We know that
\begin{align}
f_{Y^{(N)}}(y) \ = \ \frac{d}{dy}F_{Y^{(N)}}(y) \ = \ \frac{d}{dy}F_{Y^{(N)}, \textup{ main}}(y)+\frac{d}{dy}F_{Y^{(N)}, \textup{ error}}(y).
\end{align}
To differentiate $F_{Y^{(N)}(y), \textup{ main}}$ and $F_{Y^{(N)}, \textup{ error}}(y)$, we apply the \textbf{Leibniz integral rule} for differentiate under the integral sign.
\begin{lemma}\cite{Folland}
Suppose that $f(x,t)$ is a function such that both $f(x,t)$ and $df(x,t)/dx$ are continuous in $t$ and $x$ in the $xt$-plane, and $a(x)$ and $b(x)$ are also continuously differentiable. Then
\begin{align}
\frac{d}{dx}\int_{a(x)}^{b(x)}f(x,t)dt \ = \ f(x,b(x))\cdot \frac{d}{dx}b(x)-f(x,a(x))\cdot \frac{d}{dx}a(x)+\int_{a(x)}^{b(x)}\frac{\partial}{\partial x}f(x,t)dt.
\end{align}
\end{lemma}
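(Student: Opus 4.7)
The plan is to reduce the stated identity to the multivariable chain rule by treating the three dependences on $x$ (through the integrand, the lower limit, and the upper limit) as independent. Define $G(x,u,v):=\int_u^v f(x,t)\,dt$, so that the left side is $\tfrac{d}{dx}G(x,a(x),b(x))$. If I can show that $G$ is $C^1$ in $(x,u,v)$, then the chain rule yields
\[
\frac{d}{dx}G(x,a(x),b(x)) \;=\; G_x(x,a(x),b(x)) + G_u(x,a(x),b(x))\,a'(x) + G_v(x,a(x),b(x))\,b'(x),
\]
and computing the three partial derivatives individually will produce exactly the three terms on the right side of the claim.

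First I would dispose of the two boundary partials. Since $f(x,\cdot)$ is continuous in $t$, the Fundamental Theorem of Calculus gives $G_v(x,u,v)=f(x,v)$ and $G_u(x,u,v)=-f(x,u)$. Substituting $u=a(x)$ and $v=b(x)$ then contributes the terms $f(x,b(x))\,b'(x)-f(x,a(x))\,a'(x)$ on the right side of the identity.

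The substantive step is the $x$-partial. By definition,
\[
G_x(x,u,v) \;=\; \lim_{h\to 0}\int_u^v \frac{f(x+h,t)-f(x,t)}{h}\,dt.
\]
For each fixed $t$ the Mean Value Theorem applied to $s\mapsto f(s,t)$ writes the difference quotient as $f_x(\xi_h(t),t)$ with $\xi_h(t)$ between $x$ and $x+h$. Working on a compact rectangle $[x-r,x+r]\times[u,v]$, joint continuity of $f_x$ upgrades to uniform continuity, so $f_x(\xi_h(t),t)\to f_x(x,t)$ uniformly in $t\in[u,v]$ as $h\to 0$. Uniform convergence on a bounded interval then permits exchanging limit and integral, giving $G_x(x,u,v)=\int_u^v f_x(x,t)\,dt$. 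A final check — that $G_x$, $G_u$, $G_v$ depend continuously on $(x,u,v)$, so that the chain rule applies in its $C^1$ form — is immediate from the hypothesized continuity of $f$ and $f_x$ together with the compactness argument just used.

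The main obstacle is precisely this interchange of limit and integral inside $G_x$: without joint continuity of $f_x$ (which yields uniform convergence via uniform continuity on compact sets) the pass to the limit can fail, and one would need an alternative dominated-convergence justification. Once that step is in place, everything else is bookkeeping with the chain rule and the Fundamental Theorem of Calculus, and the three boundary-plus-interior terms of the stated identity fall out directly by evaluating at $u=a(x)$, $v=b(x)$.
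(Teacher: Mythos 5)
Your proof is correct. However, note that the paper does not actually prove this lemma: it is cited directly from Folland's \emph{Real Analysis}, so there is no in-paper argument to compare against. What you have written is essentially the standard textbook proof, and it is sound. The decomposition into $G(x,u,v)=\int_u^v f(x,t)\,dt$ followed by the multivariable chain rule is exactly the right skeleton; the two boundary partials come for free from the Fundamental Theorem of Calculus once $f(x,\cdot)$ is continuous; and the interior partial $G_x(x,u,v)=\int_u^v f_x(x,t)\,dt$ is justified by your Mean Value Theorem plus uniform-continuity-on-compacts argument, which legitimately upgrades pointwise convergence of the difference quotient to uniform convergence in $t$ on the bounded interval $[u,v]$ and hence permits the interchange of limit and integral. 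Your closing remark that $G_x$, $G_u$, $G_v$ are themselves jointly continuous in $(x,u,v)$ — so that the $C^1$ form of the chain rule applies — is the small but necessary bookkeeping step that students often omit, and you have it. The only thing worth flagging explicitly is that the argument as written silently uses that $[u,v]$ has finite length (so uniform convergence implies interchange); that is automatic here because $a(x)$ and $b(x)$ are finite, but it is the one place the reasoning would need repair for improper integrals.
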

For the rest of this paper, one can easily check the regularity conditions on $f(x,t)$, $a(x)$, and $b(x)$ each time we apply the Leibniz integral rule, so there will not be any explicit mention of the regularity conditions again. We first make some observations on the effect of $d/dy$ on

\begin{align}\label{iterated integral generic function}
\int_{-\infty}^{\frac{y}{d}}\prod_{j=2}^d\int_{z_{m+(j-1)-d}}^{\frac{y-\sum_{i=1}^{j-1}z_{m+i-d}}{d-j+1}}g(z_{m+1-d},\dots, z_m)dz_m\cdots dz_{m+1-d}
\end{align}
for any continuous function $g(z_{m+1-d},\dots, z_m)$. One can check that each application of the Leibniz integral rule to \eqref{iterated integral generic function} would only yield one integral, because for the other integral, the lower bound of the integration always coincides with the upper bound of the integration and thus equals to 0. As a result, each application of $d/dy$ here merely interchanges the order of the differentiation and the integral signs. Eventually,
\eqref{iterated integral generic function} becomes
\begin{align}\label{leibniz rule final formula}
&\int_{-\infty}^{\frac{y}{d}}\prod_{j=2}^{d-1}\int_{z_{m+(j-1)-d}}^{\frac{y-\sum_{i=1}^{j-1}z_{m+i-d}}{d-j+1}} \frac{\partial}{\partial y}\left(\int_{z_{m-1}}^{y-\sum_{i=1}^{d-1}z_{m+i-d}}g(z_{m+1-d},\dots, z_m)dz_m\right)dz_{m-1}\cdots dz_{m+1-d} \nonumber\\
&\quad \ = \ \int_{-\infty}^{\frac{y}{d}}\prod_{j=2}^{d-1}\int_{z_{m+(j-1)-d}}^{\frac{y-\sum_{i=1}^{j-1}z_{m+i-d}}{d-j+1}}g\left(z_{m+1-d},\dots, z_{m-1},y-\sum_{i=1}^{d-1}z_{m+i-d}\right)dz_{m-1}\cdots dz_{m+1-d}.
\end{align}
Applying \eqref{leibniz rule final formula} to $dF_{Y^{(N)}}/dy$, we have that the PDF of $Y^{(N)}$ is
\begin{align}\label{PDF of normalized log max volume}
&\frac{d}{dy}F_{Y^{(N)}}(y) \ = \ \int_{-\infty}^{\frac{y}{d}}\prod_{j=2}^{d-1}\int_{z_{m+(j-1)-d}}^{\frac{y-\sum_{i=1}^{j-1}z_{m+i-d}}{d-j+1}}f_{Z^{(N)}_{(m+1-d)}, \dots, Z^{(N)}_{(m)}}\left(z_{m+1-d},\dots, z_{m-1},y-\sum_{i=1}^{d-1}z_{m+i-d}\right) \nonumber\\
&\quad \quad \quad \quad \quad \quad \quad dz_{m-1}\cdots dz_{m+1-d}.
\end{align}
Applying \eqref{leibniz rule final formula} to $dF_{Y^{(N)}, \textup{ main}}(y)/dy$ and $dF_{Y^{(N)}, \textup{ error}}(y)/dy$, we have
\begin{align}\label{main term and error term of PDF of normalized log max volume}
&\frac{d}{dy}F_{Y^{(N)}, \textup{ main}}(y) \nonumber\\
&\quad \ = \ \int_{-\infty}^{\frac{y}{d}}\prod_{j=2}^{d-1}\int_{z_{m+(j-1)-d}}^{\frac{y-\sum_{i=1}^{j-1}z_{m+i-d}}{d-j+1}}f_{Z^{(N)}_{(m+1-d)}, \dots, Z^{(N)}_{(m)}, \textup{ main}}\left(z_{m+1-d},\dots, z_{m-1}, y-\sum_{i=1}^{d-1}z_{m+i-d}\right) \nonumber\\
&\quad\quad\quad dz_{m-1} \cdots dz_{m+1-d} \nonumber\\
&\frac{d}{dy}F_{Y^{(N)}, \textup{ error}}(y) \nonumber\\
&\quad\ = \ \int_{-\infty}^{\frac{y}{d}}\prod_{j=2}^{d-1}\int_{z_{m+(j-1)-d}}^{\frac{y-\sum_{i=1}^{j-1}z_{m+i-d}}{d-j+1}}f_{Z^{(N)}_{(m+1-d)}, \dots, Z^{(N)}_{(m)}, \textup{ error}}\left(z_{m+1-d},\dots, z_{m-1}, y-\sum_{i=1}^{d-1}z_{m+i-d}\right) \nonumber\\
&\quad\quad\quad dz_{m-1} \cdots dz_{m+1-d} \nonumber\\
\end{align}
We would like to make some truncation to the tails of integrals in \eqref{PDF of normalized log max volume} and \eqref{main term and error term of PDF of normalized log max volume}. The truncation method we now describe applies to both $dF_{Y^{(N)}}(y)/dy$ and $dF_{Y^{(N)}, \textup{ main}}(y)/dy$, so we only demonstrate it for $dF_{Y^{(N)}}(y)/dy$. We want to show that
\begin{align}\label{tail of PDF of normalized log max volume}
&\int_{-\infty}^{-N^{\delta'}}\prod_{j=2}^{d-1}\int_{z_{m+(j-1)-d}}^{\frac{y-\sum_{i=1}^{j-1}z_{m+i-d}}{d-j+1}}f_{Z^{(N)}_{(m+1-d)}, \dots, Z^{(N)}_{(m)}}\left(z_{m+1-d},\dots, z_{m-1},y-\sum_{i=1}^{d-1}z_{m+i-d}\right) \nonumber\\
& dz_{m-1}\cdots dz_{m+1-d}
\end{align}
is asymptotically small for any $\delta'$ satisfying $0<\delta'<\delta$. By Proposition \ref{order statistics joint PDF proposition},
\begin{align}
&f_{Z^{(N)}_{(m+1-d)}, \dots, Z^{(N)}_{(m)}}\left(z_{m+1-d},\dots, z_{m-1},y-\sum_{i=1}^{d-1}z_{m+i-d}\right) \nonumber\\ 
&\quad \ = \ C_{m}^{m+1-d} f_{Z^{(N)}}(z_{m+1-d})\cdots f_{Z^{(N)}}(z_{m-1})f_{Z^{(N)}}\left(y-\sum_{i=1}^{d-1}z_{m+i-d}\right)(F_{Z^{(N)}}(z_{m+1-d}))^{m-d}.
\end{align}
By Lemma \ref{PDF and CDF estimates for normalized log side length}, we know that $f_{Z^{(N)}}(z)=\varphi(z)+O(N^{-1/2-\delta})$, and $F_{Z^{(N)}}(z)=\Phi(z)+O(N^{-\delta})$, where $\varphi(z)=1/\sqrt{2\pi}e^{-z^2/2}$ is bounded between $0$ and $1/\sqrt{2\pi}$ and $0\leq \Phi(z)\leq 1$, both of which are $O(1)$. Then,
\begin{align}
f_{Z^{(N)}_{(m+1-d)}, \dots, Z^{(N)}_{(m)}}\left(z_{m+1-d},\dots, z_{m-1},y-\sum_{i=1}^{d-1}z_{m+i-d}\right) \ \ll \ f_{Z^{(N)}}(z_{m+1-d})\cdots f_{Z^{(N)}}(z_{m-1}).
\end{align}
Returning to \eqref{tail of PDF of normalized log max volume}, we have that
\begin{align}
&\int_{-\infty}^{-N^{\delta'}}\prod_{j=2}^{d-1}\int_{z_{m+(j-1)-d}}^{\frac{y-\sum_{i=1}^{j-1}z_{m+i-d}}{d-j+1}}f_{Z^{(N)}_{(m+1-d)}, \dots, Z^{(N)}_{(m)}}\left(z_{m+1-d},\dots, z_{m-1},y-\sum_{i=1}^{d-1}z_{m+i-d}\right) \nonumber\\
& dz_{m-1}\cdots dz_{m+1-d} \nonumber\\
&\quad \ \ll \ \int_{-\infty}^{-N^{\delta'}} \underbrace{\int_{-\infty}^\infty\cdots \int_{-\infty}^\infty}_{d-2} f_{Z^{(N)}}(z_{m+1-d})f_{Z^{(N)}}(z_{m+2-d})\cdots f_{Z^{(N)}}(z_{m-1}) \nonumber\\
&\quad \quad \quad dz_{m-1}\cdots dz_{m+2-d}dz_{m+1-d} \nonumber\\
&\quad \ = \ \int_{-\infty}^{-N^{\delta'}}f_{Z^{(N)}}(z_{m+1-d})dz_{m+1-d}\prod_{i=2}^{d-1}\left(\int_{-\infty}^\infty f_{Z^{(N)}}(z_{m+i-d})dz_{m+i-d}\right).
\end{align}
Since $f_{Z^{(N)}}(z)$ is a PDF, then $\int_{-\infty}^\infty f_{Z^{(N)}}(z)dz=1$. Moreover,
\begin{align}
\int_{-\infty}^{-N^{\delta'}}f_{Z^{(N)}}(z_{m+1-d})dz_{m+1-d} &\ = \ \mathbb{P}\left(Z^{(N)}\leq  -N^{\delta'}\right) \nonumber\\
&\ \leq \ \mathbb{P}\left(|Z^{(N)}-\mathbb{E}[Z^{(N)}]|\geq N^{\delta'}+\mathbb{E}[Z^{(N)}]\right) \nonumber\\
&\ \leq \ \frac{\textup{Var}[Z^{(N)}]}{(N^{\delta'}+\mathbb{E}[Z^{(N)}])^2} \ = \ O(N^{-2\delta'}),
\end{align}
where the last line follows from Chebyshev's inequality and the fact that the mean and the variance of $Z^{(N)}$ are $0$ and $1$ respectively by construction \eqref{log side length expected value and variance} and \eqref{normalized log side length}.
Hence, we have established that the tail of $dF_{Y^{(N)}}(y)/dy$ is $O(N^{-2\delta'})$, i.e.,
\begin{align}
&\frac{d}{dy}F_{Y^{(N)}}(y) \nonumber\\
&\quad \ = \ \int_{-N^{\delta'}}^{\frac{y}{d}}\prod_{j=2}^{d-1}\int_{z_{m+(j-1)-d}}^{\frac{y-\sum_{i=1}^{j-1}z_{m+i-d}}{d-j+1}}f_{Z^{(N)}_{(m+1-d)}, \dots, Z^{(N)}_{(m)}}\left(z_{m+1-d},\dots, z_{m-1},y-\sum_{i=1}^{d-1}z_{m+i-d}\right) \nonumber\\
&\quad\quad \quad dz_{m-1}\cdots dz_{m+1-d}+O(N^{-2\delta'}),
\end{align}
By the same truncation method, one can show that the tail of $dF_{Y^{(N)}, \textup{ main}}(y)/dy$ is also $O(N^{-2\delta'})$. The tail of $dF_{Y^{(N)}, \textup{ error}}(y)/dy$ is the difference between the tail of $dF_{Y^{(N)}}(y)/dy$ and that of $dF_{Y^{(N)}, \textup{ main}}(y)/dy$ by definition, then it is also $O(N^{-2\delta'})$. Thus,
\begin{align}\label{truncated main term of PDF of normalized log max volume plus error term}
&\frac{d}{dy}F_{Y^{(N)}, \textup{ main}}(y) \nonumber\\
&\quad \ = \ \int_{-N^{\delta'}}^{\frac{y}{d}}\prod_{j=2}^{d-1}\int_{z_{m+(j-1)-d}}^{\frac{y-\sum_{i=1}^{j-1}z_{m+i-d}}{d-j+1}}f_{Z^{(N)}_{(m+1-d)}, \dots, Z^{(N)}_{(m)}, \textup{ main}}\left(z_{m+1-d},\dots, z_{m-1}, y-\sum_{i=1}^{d-1}z_{m+i-d}\right) \nonumber\\
&\quad\quad\quad dz_{m-1} \cdots dz_{m+1-d}+O(N^{-2\delta'}),
\end{align}
\begin{align}\label{truncated error term of PDF of normalized log max volume plus error term}
&\frac{d}{dy}F_{Y^{(N)}, \textup{ error}}(y) \nonumber\\
&\quad \ = \ \int_{-N^{\delta'}}^{\frac{y}{d}}\prod_{j=2}^{d-1}\int_{z_{m+(j-1)-d}}^{\frac{y-\sum_{i=1}^{j-1}z_{m+i-d}}{d-j+1}}f_{Z^{(N)}_{(m+1-d)}, \dots, Z^{(N)}_{(m)}, \textup{ error}}\left(z_{m+1-d},\dots, z_{m-1}, y-\sum_{i=1}^{d-1}z_{m+i-d}\right) \nonumber\\
&\quad\quad\quad dz_{m-1} \cdots dz_{m+1-d}+O(N^{-2\delta'}).
\end{align}

\subsection{Outline of problem}\label{outline of problem} We are ready to formulate Conjecture \ref{box fragmentation conjecture} in the language that we have set forth in Section \ref{prelim of proof of box fragmentation theorem}. We want to show that $\mathfrak{m}_d^{(N)}$ converges to strong Benford behavior, which is equivalent to showing that $\log_B(\mathfrak{m}_d^{(N)})$ converges to being equidistributed mod 1. We know that $Y^{(N)}:=(\log_B(\mathfrak{m}_d^{(N)})-d\cdot N\mu_P)/\sqrt{N}\sigma_P$. Since equidistribution mod 1 is invariant under translation \cite{Mil1}, then it suffices to show that $\log_B(\mathfrak{m}_d^{(N)})-d\cdot N\mu_P$ converges to being equidistributed mod 1. Moreover, without loss of generality, we can assume $\sigma_P=1$ from now on, as the choice of renormalization would not change our final results. Hence, in terms of the PDF $f_{Y^{(N)}}(y)$, we can formulate our problem as to showing
\begin{align}
F_N(a,b) \ := \ \mathbb{P}\left(Y^{(N)} \textup{ mod 1} \in (a,b)\right) \ = \ \sum_{n=-\infty}^{\infty}\int_{\frac{a+n}{\sqrt{N}}}^{\frac{b+n}{\sqrt{N}}}f_{Y^{(N)}}(y)dy \ \approx \ b-a,
\end{align}
for all $(a,b)\subset (0,1)$ for sufficiently large $N$.
Now, recall that we have truncated the outermost integral of $f_{Y^{(N)}}(y)=dF_{Y^{(N)}}(y)/dy$ at $-N^{\delta'}$ in Section \ref{prelim of proof of box fragmentation theorem}, which is exactly the lower bound for $Z_{(m+1-d)}$. Since $Z_{(m+1-d)}\leq Z_{(m+2-d)}\leq \cdots \leq Z_{(m)}$, then the lower bound for their sum, $Y^{(N)}$, is $-dN^{\delta'}$. Hence, it is reasonable to truncate the sum of $F_N(a,b)$ at $n=\pm dN^{1/2+\delta'}$ and instead show that
\begin{align}
\sum_{n=-dN^{1/2+\delta'}}^{dN^{1/2+\delta'}}\int_{\frac{a+n}{\sqrt{N}}}^{\frac{b+n}{\sqrt{N}}}f_{Y^{(N)}}(y)dy \ \approx \ b-a.
\end{align}
We can make the truncation because the tails of the sum is negligible:
\begin{align}\label{truncation of tail of probability of normalized log max}
\int_{\left|y\right|\geq dN^{\delta'}}f_{Y^{(N)}}(y)dy &\ = \ \mathbb{P}\left(\left|Y^{(N)}\right|\geq dN^{\delta'}\right) \nonumber\\
&\ \leq \ \mathbb{P}\left(\left|Y^{(N)}-\mathbb{E}[Y^{(N)}]\right|\geq dN^{\delta'}-\mathbb{E}[Y^{(N)}]\right) \nonumber\\
&\quad\quad +\mathbb{P}\left(\left|Y^{(N)}-\mathbb{E}[Y^{(N)}]\right|\geq dN^{\delta'}+\mathbb{E}[Y^{(N)}]\right) \nonumber\\
&\ \leq \ \frac{\textup{Var}[Y^{(N)}]}{(dN^{\delta'}-\mathbb{E}[Y^{(N)}])^2}+\frac{\textup{Var}[Y^{(N)}]}{(dN^{\delta'}+\mathbb{E}[Y^{(N)}])^2} \ = \ O(N^{-2\delta'}),
\end{align}
where the last line follows from Chebyshev's inequality and the fact that the mean and the variance of $Y^{(N)}$ are finite, as it is the sum of $d$ random variables whose means and variances are finite. From now on, we shall only consider the bulk part of $F_N(a,b)$. Let the main term and the error term of $F_N(a,b)$ be defined respectively as
\begin{align}\label{main term of probability of normalized log max volume between a and b mod 1}
\mathcal{M}_N(a,b) &\ := \ \sum_{n=-dN^{1/2+\delta'}}^{dN^{1/2+\delta'}}\int_{\frac{a+n}{\sqrt{N}}}^{\frac{b+n}{\sqrt{N}}}\frac{d}{dy}F_{Y^{(N)}, \textup{ main}}(y)dy;
\end{align}
\begin{align}\label{error term of probability of normalized log max volume between a and b mod 1}
\mathcal{E}_N(a,b) &\ := \ \sum_{n=-dN^{1/2+\delta'}}^{dN^{1/2+\delta'}}\int_{\frac{a+n}{\sqrt{N}}}^{\frac{b+n}{\sqrt{N}}}\frac{d}{dy}F_{Y^{(N)}, \textup{ error}}(y)dy.
\end{align}
Our goal is to show
\begin{align}
\mathcal{M}_N(a,b) \ \approx \ b-a, \quad \mathcal{E}_N(a,b) \ \approx \ 0.
\end{align}

\subsection{Upper bound on error term $\mathcal{E}_N(a,b)$}\label{Section: Upper bound on error term} We first want to show that the error term $\mathcal{E}_N(a,b)$ is negligible. We start by giving some bounds on $dF_{Y^{(N)}, \textup{error}}(y)/dy$ from \eqref{main term and error term of PDF of normalized log max volume}. Recall that $f_{Z^{(N)}_{(m+1-d)}, \dots, Z^{(N)}_{(m)}, \textup{ error}}(z_{m+1-d},\dots, z_m)$ is the error term of the joint density function \eqref{joint PDF of order statistics of normalized log side length}. Let $[\ell]$ be short hand for $\{1, \dots, \ell\}$. Then
\begin{align}
&\frac{1}{C_{m}^{m+1-d}}f_{Z^{(N)}_{(m+1-d)}, \dots, Z^{(N)}_{(m)}, \textup{ error}}(z_{m+1-d},\dots, z_m) \nonumber\\
&\quad \ = \ \Phi(z_{m+1-d})^{m-d}\left(\prod_{i=1}^d (\varphi(z_{m+i-d})+A(z_{m+i-d}))-\prod_{i=1}^d\varphi(z_{m+i-d})\right) \nonumber\\
&\quad\quad\quad+B(z_{m+1-d})\prod_{i=1}^d(\varphi(z_{m+i-d})+A(z_{m+i-d})) \nonumber\\
&\quad \ = \ \left(\Phi(z_{m+1-d})^{m-d}+B(z_{m+1-d})\right)\left(\sum_{S\subsetneq [d]}\left(\prod_{i\in S}\varphi(z_{m+i-d})\right)\left(\prod_{i\in [d]\setminus S}A(z_{m+i-d})\right)\right) \nonumber\\
&\quad\quad\quad +B(z_{m+1-d})\prod_{i=1}^d \varphi(z_{m+i-d}) \nonumber\\
&\quad \ \ll \ \sum_{S\subsetneq [d]}\left(\prod_{i\in S}\varphi(z_{m+i-d})\right)\left(\prod_{i\in [d]\setminus S}A(z_{m+i-d})\right)+N^{-\delta}\prod_{i=1}^d \varphi(z_{m+i-d}).
\end{align}
We know from \eqref{main term and error term of PDF of normalized log max volume} that to bound $dF_{Y^{(N)}, \textup{ error}}(y)/dy$, it suffices to bound
\begin{align}\label{first term of error term of PDF of normalized log max volume}
D_1(y) &\ := \ \sum_{S\subsetneq [d]}\int_{-N^{\delta'}}^{\frac{y}{d}}\prod_{j=2}^{d-1}\int_{z_{m+(j-1)-d}}^{\frac{y-\sum_{i=1}^{j-1}z_{m+i-d}}{d-j+1}}\left(\prod_{i\in S}\varphi(z_{m+i-d})\right) \nonumber\\
&\quad\quad \cdot\left(\prod_{i\in [d]\setminus S}A(z_{m+i-d})\right)\Bigg|_{z_m=y-\sum_{i=1}^{d-1}z_{m+i-d}}dz_{m-1}\cdots dz_{m+1-d}
\end{align}
as well as
\begin{align}\label{Second term of error term of PDF of normalized log max volume}
D_2(y) \ := \ N^{-\delta}\int_{-N^{\delta'}}^{\frac{y}{d}}\prod_{j=2}^{d-1}\int_{z_{m+(j-1)-d}}^{\frac{y-\sum_{i=1}^{j-1}z_{m+i-d}}{d-j+1}}\prod_{i=1}^d\varphi(z_{m+i-d})\Bigg|_{z_m=y-\sum_{i=1}^{d-1}z_{m+i-d}}dz_{m-1}\cdots dz_{m+1-d},
\end{align}
because then by \eqref{truncated error term of PDF of normalized log max volume plus error term}, we have
\begin{align}\label{error term of PDF of normalized log max volume decomposed as first term second term and error term}
\frac{d}{dy}F_{Y^{(N)}, \textup{ error}}(y) \ \ll \ D_1(y)+D_2(y)+O(N^{-2\delta'}).
\end{align}

We first look at $D_1(y)$. Suppose that $k_S$ is the largest index such that $k\in [d]\setminus S$. Such an index $k$ exists, since $S\neq [d]$. First, consider $S$'s with $k_S=d$. Then the integrand of \eqref{first term of error term of PDF of normalized log max volume} becomes
\begin{align}
&\left(\prod_{i\in S}\varphi(z_{m+i-d})\right)\left(\prod_{i\in [d]\setminus S}A(z_{m+i-d})\right)\Bigg|_{z_m=y-\sum_{i=1}^{d-1}z_{m+i-d}} \nonumber\\
&\quad \ = \ \left(\prod_{i\in S}\varphi(z_{m+i-d})\right)\left(\prod_{i\in [d-1]\setminus S}A(z_{m+i-d})\right)A\left(y-\sum_{i=1}^{d-1}z_{m+i-d}\right) \nonumber\\
&\quad\ \ll \ N^{-\frac{1}{2}-\delta}\left(\prod_{i\in S}\varphi(z_{m+i-d})\right)\left(\prod_{i\in [d-1]\setminus S}A(z_{m+i-d})\right),
\end{align}
where in the last line we use the fact that $A(x)=O(N^{-1/2-\delta})$. Now, recall the two truncations we performed in \eqref{truncated error term of PDF of normalized log max volume plus error term} and in \eqref{truncation of tail of probability of normalized log max}. These truncations ensure that $z_{m+1-d}$ is at least $-N^{\delta'}$ and that $y$ is bounded above by $dN^{\delta'}$ and below by $-dN^{\delta'}$. Since $y=z_{m+1-d}+\cdots+z_m$ and $z_{m+1-d}\leq \cdots\leq z_m$, then $z_m$ is at most $(2d-1)N^{\delta'}$, which occurs when $y=dN^{\delta'}$ and $z_{m+1-d}=\cdots=z_{m-1}=-N^{\delta'}$. Hence, $y, z_{m+1-d}, \dots, z_{m-1}$ are all $O(N^{\delta'})$, and the lower and upper bounds on the domain of integration for each integral in \eqref{first term of error term of PDF of normalized log max volume} are $O(N^{\delta'})$. Choose a constant $C$ such that $\pm CN^{\delta'}$ are uniform upper and lower bounds on the domains of integration for all the integrals in \eqref{first term of error term of PDF of normalized log max volume}. Hence,
\begin{align}
&\int_{-N^{\delta'}}^{\frac{y}{d}}\prod_{j=2}^{d-1}\int_{z_{m+(j-1)-d}}^{\frac{y-\sum_{i=1}^{j-1}z_{m+i-d}}{d-j+1}}\left(\prod_{i\in S}\varphi(z_{m+i-d})\right)\left(\prod_{i\in [d]\setminus S}A(z_{m+i-d})\right)\Bigg|_{z_m=y-\sum_{i=1}^{d-1}z_{m+i-d}} \nonumber\\
& dz_{m-1}\cdots dz_{m+1-d} \nonumber\\
&\quad\ \ll \ N^{-1/2-\delta}\underbrace{\int_{-CN^{\delta'}}^{CN^{\delta'}}\cdots \int_{-CN^{\delta'}}^{CN^{\delta'}}}_{d-1}\left(\prod_{i\in S}\varphi(z_{m+i-d})\right)\left(\prod_{i\in [d-1]\setminus S}N^{-1/2-\delta}\right)dz_{m-1}\cdots dz_{m+1-d} \nonumber\\
&\quad \ \ll \ N^{-1/2-\delta+|[d-1]\setminus S|(-1/2-\delta+\delta')}\prod_{i\in S}\left(\int_{-CN^{\delta'}}^{CN^{\delta'}}\varphi(z_{m+i-d})dz_{m+i-d}\right) \nonumber\\
&\quad \ \ll \ N^{-1/2-\delta+|[d-1]\setminus S|(-1/2-\delta+\delta')} \ \ll \ N^{-1/2-\delta},
\end{align}
where in the last line we use the fact that $0<\delta'<\delta$, as well as
\begin{align}
\int_{-CN^{\delta'}}^{CN^{\delta'}}\varphi(x)dx \ \ll \ \int_{-\infty}^\infty \varphi(x)dx \ = \ 1,
\end{align}
which we shall use without reference for the remainder of the paper. Since there is only a finite number of subsets $S$ of $[d]$, then \eqref{first term of error term of PDF of normalized log max volume} has the estimate 
\begin{align}\label{first term of error term of PDF of normalized log max volume case I estimate}
D_1(y) \ \ll \ N^{-1/2-\delta}.
\end{align}

We move on to the case when $k_S<d$, which is similar to the $k_S=d$ case. Now, the $O(N^{-1/2-\delta})$ decay that we need comes from the $A(z_{m+k_S-d})$ term. With this in mind, using the same method for the case when $k_S=d$ as well as the convolution formula for Gaussian PDF, we can again show that $D_1(y)\ll N^{-1/2-\delta}$. We leave the details to \ref{appendix kS less than d}.

We now turn to $D_2(y)$ as defined in \eqref{Second term of error term of PDF of normalized log max volume}, which is relatively more straightforward to bound. We have
\begin{align}
D_2(y) \ \ll \ N^{-\delta}\underbrace{\int_{-\infty}^\infty \cdots \int_{-\infty}^\infty}_{d-1}\prod_{i=1}^{d-1}\varphi(z_{m+i-d})\varphi\left(y-\sum_{i=1}^{d-1}z_{m+i-d}\right)dz_{m-1}\cdots dz_{m+1-d}.
\end{align}
We recognize that the integral is the convolution of $d$ standard Gaussian density functions. We know that the convolution is the probability density function of sum of $d$ i.i.d. standard Gaussian random variables, which itself is also with mean 0 and variance $d$. Hence,
\begin{align}\label{second term of error term of PDF of normalized log max volume estimate}
D_2(y) \ \ll \ N^{-\delta}\frac{1}{\sqrt{2\pi d}}e^{-y^2/(2d)}.
\end{align}
Thus, combining the estimates \eqref{first term of error term of PDF of normalized log max volume case I estimate}, \eqref{first term of error term of PDF of normalized log max volume case II estimate} on $D_1(y)$, the estimate \eqref{second term of error term of PDF of normalized log max volume estimate} on $D_2(y)$, as well as \eqref{error term of PDF of normalized log max volume decomposed as first term second term and error term}, we have
\begin{align}
\frac{d}{dy}F_{Y^{(N)}, \textup{ error}}(y) &\ \ll \ D_1(y)+D_2(y)+N^{-2\delta'} \ \ll \ N^{-1/2-\delta}+N^{-\delta}\frac{1}{\sqrt{2\pi d}}e^{-y^2/(2d)}+N^{-2\delta'}.
\end{align}

We are now ready to bound the error term $\mathcal{E}_N(a,b)$ from \eqref{error term of probability of normalized log max volume between a and b mod 1}:
\begin{align}\label{error term of probability of normalized log max volume between a and b mod 1 final estimate}
\mathcal{E}_N(a,b) &\ = \ \sum_{n=-dN^{1/2+\delta'}}^{dN^{1/2+\delta'}}\int_{\frac{a+n}{\sqrt{N}}}^{\frac{b+n}{\sqrt{N}}}\frac{d}{dy}F_{Y^{(N)}, \textup{ error}}(y)dy \nonumber\\
&\ \ll \ N^{-\delta}\int_{-dN^{\delta'}}^{dN^{\delta'}}\frac{1}{\sqrt{2\pi d}}e^{-y^2/(2d)}dy+\int_{-dN^{\delta'}}^{dN^{\delta'}}N^{-1/2-\delta}+N^{-2\delta'}dy
\ \ll \ N^{-\delta'},
\end{align}
where we use the fact that $0<\delta'<\delta<1/2$.

\subsection{Strategy for remainder of proof}
Our next task is to show that the main term  $\mathcal{M}_N(a,b)$ defined in \eqref{main term of probability of normalized log max volume between a and b mod 1} satisfies
\begin{align}
\mathcal{M}_N(a,b) \ := \ \sum_{-dN^{1/2+\delta'}}^{dN^{1/2+\delta'}}\int_{\frac{a+n}{\sqrt{N}}}^{\frac{b+n}{\sqrt{N}}}\frac{d}{dy}F_{Y^{(N)}, \textup{ main}}(y)dy \ \approx \ b-a.
\end{align}
Our strategy is the following. First, we want to write the integral of $dF_{Y^{(N)}, \textup{ main}}(y)/dy$ over each interval $[(a+n)/\sqrt{N},(b+n)/\sqrt{N}]$ as the sum of a constant main term $m_n(a,b)$ together with an error term $e_n(a,b)$ which, as we shall prove, does not accumulate, i.e.,
\begin{align}
\mathcal{M}_{N, \textup{ error}}(a,b) \ = \ \sum_{n=-dN^{1/2+\delta'}}^{dN^{1/2+\delta'}}e_n(a,b) \ \approx \ 0.
\end{align}
Finally, we prove that the sum of the main term
\begin{align}\label{main part of main term of probability of normalized log max volume between a and b mod 1}
\mathcal{M}_{N, \textup{ main}}(a,b) \ = \ \sum_{n=-dN^{1/2+\delta'}}^{dN^{1/2+\delta'}}m_n(a,b)
\end{align}
is a Riemann sum that converges to $b-a$. It then follows that
\begin{align}\label{desired estimate for main term of probability of normalized log max volume between a and b mod 1}
\mathcal{M}_N(a,b) \ = \ \mathcal{M}_{N, \textup{ main}}(a,b)+\mathcal{M}_{N, \textup{ error}}(a,b) \ \approx \ b-a.
\end{align}
Thus, based on the estimate of the error term in \eqref{error term of probability of normalized log max volume between a and b mod 1},
\begin{align}\label{desired estimate for probability of normalized log max volume between a and b mod 1}
F_N(a,b) \ = \ \mathcal{M}_N(a,b)+\mathcal{E}_N(a,b) \ \approx \ b-a,
\end{align}
which establishes the equidistribution result.

\subsection{Equidistribution within small interval}\label{section: equidistribution} In this section, we show that $dF_{Y^{(N)}, \textup{ main}}(y)/dy$ is equidistributed within each small interval $[(a+n)/\sqrt{N},(b+n)/\sqrt{N}]$. For each $n$, we write the integral over $[(a+n)/\sqrt{N},(b+n)/\sqrt{N}]$ as the sum of a constant main term $m_n(a,b)$ and an error term $e_n(a,b)$, i.e.,
\begin{align}
\int_{\frac{a+n}{\sqrt{N}}}^{\frac{b+n}{\sqrt{N}}}\frac{d}{dy}F_{Y^{(N)}, \textup{ main}}(y)dy \ = \ m_n(a,b)+e_n(a,b),
\end{align}
where
\begin{align}\label{main term and error term of probability of normalized log max volume over small interval}
m_n(a,b) &\ := \ \int_{\frac{a+n}{\sqrt{N}}}^{\frac{b+n}{\sqrt{N}}}\frac{d}{dy}F_{Y^{(N)}, \textup{ main}}\left(\frac{n}{\sqrt{N}}\right)dy, \nonumber\\
e_n(a,b) &\ := \ \int_{\frac{a+n}{\sqrt{N}}}^{\frac{b+n}{\sqrt{N}}}\frac{d}{dy}F_{Y^{(N)}, \textup{ main}}(y)-\frac{d}{dy}F_{Y^{(N)}, \textup{ main}}\left(\frac{n}{\sqrt{N}}\right)dy.
\end{align}
Since $dF_{Y^{(N)}, \textup{ main}}(n/\sqrt{N})/dy$ is constant, then the main term $m_n(a,b)$ is
\begin{align}\label{evaluation of main term of probability of normalized log max volume over small interval}
m_n(a,b) \ = \ \frac{b-a}{\sqrt{N}}\frac{d}{dy}F_{Y^{(N)}, \textup{ main}}\left(\frac{n}{\sqrt{N}}\right).
\end{align}

We now quantify the error term. We start by providing an upper bound on
\begin{align}
\left|\frac{d}{dy}F_{Y^{(N)}, \textup{ main}}(y)-\frac{d}{dy}F_{Y^{(N)}, \textup{ main}}\left(\frac{n}{\sqrt{N}}\right)\right|,
\end{align}
for $y\in [n/\sqrt{N},(n+1)/\sqrt{N}]\supset [(a+n)/\sqrt{N},(b+n)/\sqrt{N}]$. Recall from \eqref{truncated main term of PDF of normalized log max volume plus error term} that
\begin{align}
&\frac{d}{dy}F_{Y^{(N)}, \textup{ main}}(y) \nonumber\\
&\quad \ = \ \int_{-N^{\delta'}}^{\frac{y}{d}}\prod_{j=2}^{d-1}\int_{z_{m+(j-1)-d}}^{\frac{y-\sum_{i=1}^{j-1}z_{m+i-d}}{d-j+1}}f_{Z^{(N)}_{(m+1-d)}, \dots, Z^{(N)}_{(m)}, \textup{ main}}\left(z_{m+1-d},\dots, z_{m-1},y-\sum_{i=1}^{d-1}z_{m+i-d}\right) \nonumber\\
&\quad\quad\quad dz_{m-1}\cdots dz_{m+1-d}+O(N^{-2\delta'}) \nonumber\\
&\quad \ = \ \ \int_{-N^{\delta'}}^{\frac{y}{d}}\prod_{j=2}^{d-1}\int_{z_{m+(j-1)-d}}^{\frac{y-\sum_{i=1}^{j-1}z_{m+i-d}}{d-j+1}}\left(\Phi(z_{m+1-d})\right)^{m-d}\prod_{i=1}^{d-1}\varphi(z_{m+i-d}) \nonumber\\
&\quad \quad\quad \cdot\varphi\left(y-\sum_{i=1}^{d-1}z_{m+i-d}\right)dz_{m-1}\cdots dz_{m+1-d}+O(N^{-2\delta'}).
\end{align}
We let
\begin{align}\label{main part of main term of PDF of normalized log max volume}
&I_N(y) \nonumber\\
&\quad \ := \ \int_{-N^{\delta'}}^{\frac{y}{d}}\prod_{j=2}^{d-1}\int_{z_{m+(j-1)-d}}^{\frac{y-\sum_{i=1}^{j-1}z_{m+i-d}}{d-j+1}}f_{Z^{(N)}_{(m+1-d)}, \dots, Z^{(N)}_{(m)}, \textup{ main}}\left(z_{m+1-d},\dots, z_{m-1},y-\sum_{i=1}^{d-1}z_{m+i-d}\right) \nonumber\\
&\quad \quad \quad dz_{m-1}\cdots dz_{m+1-d}.
\end{align}
We see that $I_N(y)$ is a continuous function over $[n/\sqrt{N},(n+1)/\sqrt{N}]$ and a differentiable function over $(n/\sqrt{N},(n+1)/\sqrt{N})$. By the Mean Value Theorem, for any $y_1, y_2\in [n/\sqrt{N}, (n+1)/\sqrt{N}]$ with $y_1<y_2$, we have
\begin{align}
I_N(y_1)-I_N(y_2) \ = \ \frac{d}{dy}I_N(c_n)(y_1-y_2)
\end{align}
for some $c_n\in (y_1, y_2)$. Hence,
\begin{align}\label{MVT applied to main part of main term of PDF of normalized log max volume}
\left|\frac{d}{dy}F_{Y^{(N)}, \textup{ main}}(y_1)-\frac{d}{dy}F_{Y^{(N)}, \textup{ main}}(y_2)\right| &\ \leq \ \left|I_N(y_1)-I_N(y_2)\right|+O(N^{-2\delta'}) \nonumber\\
&\ \leq \ \left|\frac{d}{dy}I_N(c_n)\right||y_1-y_2|+O(N^{-2\delta'})\nonumber\\ 
&\ \ll \ \frac{1}{\sqrt{N}}\left|\frac{d}{dy}I_N(c_n)\right|+O(N^{-2\delta'}).
\end{align}
To obtain an estimate on $dI_N(y)/dy$, we differentiate $I_N(y)$ by following the same procedure as shown in Section \ref{section: Probability density function} to exchange the order of differentiation and integration.
Since the product in the integrand in \eqref{main part of main term of PDF of normalized log max volume} goes from $j=2$ to $d-1$, we need to discuss the cases when $d=2$ and $d>2$ separately. If $d=2$,
\begin{align}
I_N(y) \ = \ \int_{-N^{\delta'}}^{\frac{y}{2}}f_{Z^{(N)}_{(m-1)}, Z^{(N)}_{(m)}, \textup{ main}}\left(z_{m-1},y-z_{m-1}\right)dz_{m-1}.
\end{align}
By the Leibniz integral rule and \eqref{main term of joint PDF of order statistics of normalized log side length},
\begin{align}\label{derivative of main part of main term of PDF of normalized log max volume dimension 2}
&\frac{d}{dy}I_N(y) \nonumber\\
&\quad \ = \ \frac{1}{2}f_{Z^{(N)}_{(m-1)}, Z^{(N)}_{(m)}, \textup{ main}}\left(\frac{y}{2},\frac{y}{2}\right)+\int_{-N^{\delta'}}^{\frac{y}{2}}\frac{\partial }{\partial y}\left(f_{Z^{(N)}_{(m-1)}, Z^{(N)}_{(m)}, \textup{ main}}(z_{m-1},y-z_{m-1})\right)dz_{m-1} \nonumber\\
&\quad \ \ll \ \varphi\left(\frac{y}{2}\right)+\int_{-N^{\delta'}}^{\frac{y}{2}}(\Phi(z_{m-1}))^{m-2}\varphi(z_{m-1})\cdot (-1)\cdot (y-z_{m-1})\varphi(y-z_{m-1})dz_{m-1}.
\end{align}
We know that $y\geq -2N^{\delta'}$, so $y/2\geq -N^{\delta'}$. We now break into cases when $y\leq 0$ and $y\geq 0$. When $y\leq 0$, $\varphi(z_{m-1})$ is at most $\varphi(y/2)$ on $[-N^{\delta'},y/2]$. So \eqref{derivative of main part of main term of PDF of normalized log max volume dimension 2} becomes
\begin{align}
\frac{d}{dy}I_N(y) &\ \ll \ \varphi\left(\frac{y}{2}\right)+\varphi\left(\frac{y}{2}\right)\int_{-\infty}^\infty |y-z_{m-1}|\varphi(y-z_{m-1})dz_{m-1} \nonumber\\
&\ = \ \varphi\left(\frac{y}{2}\right)+\varphi\left(\frac{y}{2}\right)\int_{-\infty}^\infty |z_{m-1}|\varphi(z_{m-1})dz_{m-1} \ \ll \ \varphi\left(\frac{y}{2}\right),
\end{align}
where in the last line we use the fact that the integral $\int_{-\infty}^\infty |x|\varphi(x)dx$ is the expected value of the absolute value of a standard Gaussian random variable, which is finite. When $y\geq 0$, we have
\begin{align}
y-z_{m-1} &\ \geq \ y-\frac{y}{2} \ = \ \frac{y}{2} \ \geq \ 0.
\end{align}
Hence, \eqref{derivative of main part of main term of PDF of normalized log max volume dimension 2} becomes
\begin{align}\label{derivative of main part of main term of PDF of normalized log max volume final estimate dimension 2}
\frac{d}{dy}I_N(y) &\ \ll \ \varphi\left(\frac{y}{2}\right)+\int_{-N^{\delta'}}^{\frac{y}{2}}(y-z_{m-1})\varphi(y-z_{m-1})dz_{m-1} \nonumber\\
&\ = \ \varphi\left(\frac{y}{2}\right)+\varphi(y-z_{m-1})\Bigg|_{z_{m-1}=-N^{\delta'}}^{z_{m-1}=\frac{y}{2}} \ \ll \ \varphi\left(\frac{y}{2}\right)+\varphi\left(y+N^{\delta'}\right),
\end{align}
where in the second line we employ the fact that $\int (y-x)\varphi(y-x)dx=\varphi(y-x)$. Thus, if $d=2$, regardless of whether $y\leq 0$ or $y\geq 0$, $d^2\mathcal{M}_{Y^{(N)}}(y)/dy^2$ is on the order of $\varphi(y/2)+\varphi(y+N^{\delta'})$. This concludes the estimate of $d^2\mathcal{M}_{Y^{(N)}}(y)/dy^2$ for $d=2$.

If $d> 2$, we repeatedly apply Leibniz integral rule as we have demonstrated in Subsection \ref{Section: Upper bound on error term} to \eqref{main part of main term of PDF of normalized log max volume} and also use \eqref{main term of joint PDF of order statistics of normalized log side length} to obtain
\begin{align}\label{derivative of main part of main term of PDF of normalized log max volume high dimension}
&\frac{d}{dy}I_N(y)\nonumber\\
&\ = \ \int_{-N^{\delta'}}^{\frac{y}{d}}\prod_{j=2}^{d-2}\int_{z_{m+(j-1)-d}}^{\frac{y-\sum_{i=1}^{j-1}z_{m+i-d}}{d-j+1}}\frac{\partial}{\partial y}\Bigg(\int_{z_{m-2}}^{\frac{y-\sum_{i=1}^{d-2}z_{m+i-d}}{2}}f_{Z^{(N)}_{(m+1-d)}, \dots, Z^{(N)}_{(m)}, \textup{ main}}\Bigg(z_{m+1-d},\dots, z_{m-1},\nonumber\\
&\quad\quad y-\sum_{i=1}^{d-1}z_{m+i-d}\Bigg)dz_{m-1}\Bigg)dz_{m-2}\cdots dz_{m+1-d} \nonumber\\
&\ \ll \ \int_{-N^{\delta'}}^{\frac{y}{d}}\prod_{j=2}^{d-2}\int_{z_{m+(j-1)-d}}^{\frac{y-\sum_{i=1}^{j-1}z_{m+i-d}}{d-j+1}}\left(\Phi(z_{m+1-d})\right)^{m-d}\left(\prod_{i=1}^{d-2}\varphi(z_{m+i-d})\right)\left(\varphi\left(\frac{y-\sum_{i=1}^{d-2}z_{m+i-d}}{2}\right)\right)^2 \nonumber\\
&\quad\quad + \Bigg(\int_{z_{m-2}}^{\frac{y-\sum_{i=1}^{d-2}z_{m+i-d}}{2}} (\Phi(z_{m+1-d}))^{m-d} \left(\prod_{i=1}^{d-1}\varphi(z_{m+i-d})\right)  (-1) \left(y-\sum_{i=1}^{d-1}z_{m+i-d}\right)\nonumber\\
&\quad\quad\cdot \varphi\left(y-\sum_{i=1}^{d-1}z_{m+i-d}\right) dz_{m-1}\Bigg)dz_{m-2}\cdots dz_{m+1-d}.
\end{align}
Now, similar to the case when $d=2$, we estimate the second integrand in \eqref{derivative of main part of main term of PDF of normalized log max volume high dimension} by breaking down into cases when $(y-\sum_{i=1}^{d-2}z_{m+i-d})/2\leq 0$ and $(y-\sum_{i=1}^{d-2}z_{m+i-d})\geq 0$. We leave the details to \ref{appendix the case for d greater than 2}. Hence, we have $dI_N(y)/dy$ in \eqref{derivative of main part of main term of PDF of normalized log max volume high dimension} is bounded above by
\begin{align}\label{derivative of main part of main term of PDF of normalized log max volume high dimension estimate}
& \underbrace{\int_{-\infty}^{\infty}\cdots \int_{-\infty}^\infty}_{d-2}\left(\prod_{i=1}^{d-2}\varphi(z_{m+i-d})\right)\varphi\left(\frac{y-\sum_{i=1}^{d-2}z_{m+i-d}}{2}\right) dz_{m-2}\cdots dz_{m+1-d} \nonumber\\
&\quad\quad+ \underbrace{\int_{-\infty}^{\infty}\cdots \int_{-\infty}^\infty}_{d-2}\left(\prod_{i=1}^{d-2}\varphi(z_{m+i-d})\right)\varphi\left(y-\left(\sum_{i=1}^{d-2}z_{m+i-d}\right)-z_{m-2}\right)dz_{m-2}\cdots dz_{m+1-d}.
\end{align}
Notice that the first integral in \eqref{derivative of main part of main term of PDF of normalized log max volume high dimension estimate} is the PDF of $W_1+\cdots+W_{d-2}+2W_{d-1}$, where $W_i$'s are i.i.d. standard Gaussians, and the second integral in \eqref{derivative of main part of main term of PDF of normalized log max volume high dimension estimate} is the PDF of $V_1+\cdots+V_{d-2}+3/2V_{d-1}$, where the $V_i$'s are also i.i.d. standard Gaussians. Hence, there exists a constant $C_0>0$ such that
\begin{align}
\frac{d}{dy}I_N(y) \ \ll \ e^{-(C_0y^2)/2}.
\end{align}
Moreover, recall from \eqref{derivative of main part of main term of PDF of normalized log max volume final estimate dimension 2} that when $d=2$,
\begin{align}
\frac{d}{dy}I_N(y) \ \ll \ \varphi\left(\frac{y}{2}\right)+\varphi\left(y+N^{\delta'}\right).
\end{align}
Thus, regardless of whether $d=2$ or $d>2$,
\begin{align}
\frac{d}{dy}I_N(y) \ \ll \ e^{-\frac{D_0y^2}{2}}+e^{-\frac{D_0\left(y+N^{\delta'}\right)^2}{2}},
\end{align}
where $D_0:=\min\{C_0,1/4\}$. Note that $e^{-D_0y^2/2}$ and $\varphi(\sqrt{D_0}(y+N^{\delta'}))$ each has only one global extreme, at $y=0$ and $y=-N^{\delta'}$ respectively. Hence, locally on $[n/\sqrt{N},(n+1)/\sqrt{N}]$ the functions are monotonic and can be bounded above by the sum of its values at the two end points. Hence, for all $n$,
\begin{align}
\frac{d}{dy}I_N(y) \ \ll \ e^{-\frac{D_0\left(\frac{n}{\sqrt{N}}\right)^2}{2}}+e^{-\frac{D_0\left(\frac{n+1}{\sqrt{N}}\right)^2}{2}}+e^{-\frac{D_0\left(\frac{n}{\sqrt{N}}+N^{\delta'}\right)^2}{2}}+e^{-\frac{D_0\left(\frac{n+1}{\sqrt{N}}+N^{\delta'}\right)^2}{2}},
\end{align}
for all $y\in [n/\sqrt{N},(n+1)/\sqrt{N}]$. Now, we return to \eqref{MVT applied to main part of main term of PDF of normalized log max volume}. For all $y_1, y_2\in [n/\sqrt{N},(n+1)/\sqrt{N}]$,
\begin{align}
&\left|\frac{d}{dy}F_{Y^{(N)}, \textup{ main}}(y_1)-\frac{d}{dy}F_{Y^{(N)}, \textup{ main}}(y_2)\right| \nonumber\\
&\quad \ \ll \ \frac{1}{\sqrt{N}}\left(e^{-\frac{D_0\left(\frac{n}{\sqrt{N}}\right)^2}{2}}+e^{-\frac{D_0\left(\frac{n+1}{\sqrt{N}}\right)^2}{2}}+e^{-\frac{D_0\left(\frac{n}{\sqrt{N}}+N^{\delta'}\right)^2}{2}}+e^{-\frac{D_0\left(\frac{n+1}{\sqrt{N}}+N^{\delta'}\right)^2}{2}}\right)+O(N^{-2\delta'}).
\end{align}
Thus, the error term $e_n(a,b)$ as defined in \eqref{main term and error term of probability of normalized log max volume over small interval} is bounded by
\begin{align}\label{error term of probability of normalized log max volume over small interval final estimate}
e_n(a,b) &\ \leq \ \int_{\frac{a+n}{\sqrt{N}}}^{\frac{b+n}{\sqrt{N}}}\left|\frac{d}{dy}F_{Y^{(N)}, \textup{ main}}(y)-\frac{d}{dy}F_{Y^{(N)}, \textup{ main}}\left(\frac{n}{\sqrt{N}}\right)\right|dy \nonumber\\
&\ \ll \ \left(\frac{1}{\sqrt{N}}\right)^2\left(e^{-\frac{D_0\left(\frac{n}{\sqrt{N}}\right)^2}{2}}+e^{-\frac{D_0\left(\frac{n+1}{\sqrt{N}}\right)^2}{2}}+e^{-\frac{D_0\left(\frac{n}{\sqrt{N}}+N^{\delta'}\right)^2}{2}}+e^{-\frac{D_0\left(\frac{n+1}{\sqrt{N}}+N^{\delta'}\right)^2}{2}}\right)\nonumber\\
&\quad\quad+O(N^{-1/2-2\delta'}).
\end{align}

\subsection{Upper bound on error term $\mathcal{M}_{N, \textup{ error}}(a,b)$} In this section, we want to prove that the error term of the main term $\mathcal{M}_N(a,b)$ is small, i.e.,
\begin{align}
\mathcal{M}_{N, \textup{ error}}(a,b) \ = \ \sum_{n=-dN^{1/2+\delta'}}^{dN^{1/2+\delta'}}e_n(a,b) \ \approx \ 0.
\end{align}
Based on the estimate of $e_n(a,b)$ in \eqref{error term of probability of normalized log max volume over small interval final estimate}, we can pull out one of the $1/\sqrt{N}$ factors and obtain
\begin{align}
&\mathcal{M}_{N, \textup{ error}}(a,b) \ = \ \sum_{n=-dN^{1/2+\delta'}}^{dN^{1/2+\delta'}}e_n(a,b) \nonumber\\
&\ \ll \ \frac{1}{\sqrt{N}}\cdot \left(\frac{1}{\sqrt{N}}\sum_{n=-dN^{1/2+\delta'}}^{dN^{1/2+\delta'}}\left(e^{-\frac{D_0\left(\frac{n}{\sqrt{N}}\right)^2}{2}}+e^{-\frac{D_0\left(\frac{n+1}{\sqrt{N}}\right)^2}{2}}+e^{-\frac{D_0\left(\frac{n}{\sqrt{N}}+N^{\delta'}\right)^2}{2}}+e^{-\frac{D_0\left(\frac{n+1}{\sqrt{N}}+N^{\delta'}\right)^2}{2}}\right)\right) \nonumber\\
&\quad\quad+O(N^{-\delta'}).
\end{align}
We observe that item in the parentheses above is double of the sum of the Riemann sums for $\int_{-\infty}^\infty e^{-D_0x^2/2}dx$ and $\int_{-\infty}^\infty e^{-D_0(x+N^{\delta'})^2/2}dx$, which are both finite. Hence,
\begin{align}\label{error part of main term of probability of normalized log max volume between a and b mod 1 final estimate}
\mathcal{M}_{N, \textup{ error}}(a,b) \ \ll \ \frac{1}{\sqrt{N}}+N^{-\delta'} \ \ll \ N^{-\delta'}.
\end{align}
We have thus established that the error term $\mathcal{M}_{\textup{err},N}(a,b)$ of the main term $\mathcal{M}_N(a,b)$ is negligible.

\subsection{Evaluation of main term $\mathcal{M}_{N, \textup{ main}}(a,b)$} Finally, we establish the main term $\mathcal{M}_{N, \textup{ main}}(a,b)$ of $\mathcal{M}_{N}(a,b)$. From \eqref{main part of main term of probability of normalized log max volume between a and b mod 1} and \eqref{evaluation of main term of probability of normalized log max volume over small interval}, we see that
\begin{align}
\mathcal{M}_{N, \textup{ main}}(a,b) &\ = \ \sum_{n=-dN^{1/2+\delta'}}^{dN^{1/2+\delta'}}m_n(a,b) \nonumber\\
&\ = \ (b-a)\cdot \left(\frac{1}{\sqrt{N}}\sum_{n=-dN^{1/2+\delta'}}^{dN^{1/2+\delta'}}\frac{d}{dy}F_{Y^{(N)}, \textup{ main}}\left(\frac{n}{\sqrt{N}}\right)\right).
\end{align}
Since the item in the parentheses above is the Riemann sum for $\int_{-\infty}^\infty \frac{d}{dy}F_{Y^{(N)}, \textup{ main}}(y)dy$, we have
\begin{align}
\frac{1}{\sqrt{N}}\sum_{n=-dN^{1/2+\delta'}}^{dN^{1/2+\delta'}}\frac{d}{dy}F_{Y^{(N)}, \textup{ main}}\left(\frac{n}{\sqrt{N}}\right) &\ = \ \int_{-\infty}^\infty \frac{d}{dy}F_{Y^{(N)}, \textup{ main}}(y)dy+o(1) \nonumber\\
&\ = \ F_{Y^{(N)}, \textup{ main}}(\infty)-F_{Y^{(N)}, \textup{ main}}(-\infty)+o(1).
\end{align}
Hence the main term $\mathcal{M}_{N, \textup{ main}}(a,b)$ becomes
\begin{align}\label{main part of main term of probability of normalized log max volume between a and b mod 1 final estimate}
\mathcal{M}_{N, \textup{ main}}(a,b) \ = \ (b-a)(F_{Y^{(N)}, \textup{ main}}(\infty)-F_{Y^{(N), \textup{ main}}}(-\infty))+o(1).
\end{align}
Recall that
\begin{align}
F_{Y^{(N)}, \textup{ main}}(y) &\ = \ \int_{-\infty}^{\frac{y}{d}}\prod_{j=2}^{d}\int_{z_{m+(j-1)-d}}^{\frac{y-\sum_{i=1}^{j-1}z_{m+i-d}}{d-j+1}}f_{Z^{(N)}_{(m+1-d)}, \dots, Z^{(N)}_{(m)}, \textup{ main}}(z_{m+1-d},\dots, z_m)dz_m\cdots dz_{m+1-d}.
\end{align}
Now, by Proposition \ref{order statistics joint PDF proposition} and \eqref{main term of joint PDF of order statistics of normalized log side length},
\begin{align}
f_{Z^{(N)}_{(m+1-d)}, \dots, Z^{(N)}_{(m)}, \textup{ main}}(z_{m+1-d},\dots, z_m) \ := \ C_{m}^{m+1-d}\left(\Phi(z_{m+1-d})\right)^{m-d}\prod_{i=1}^d \varphi(z_{m+i-d})
\end{align}
is the joint PDF of $W_{(m+1-d)}, \dots, W_{(m)}$, where $W_1, \dots, W_m$ are i.i.d. standard Gaussians. Thus, following our derivation in Subsection \ref{section: Probability density function},
\begin{align}
F_{Y^{(N)}, \textup{ main}}(y) \ = \ \int_{-\infty}^{\frac{y}{d}}\prod_{j=2}^{d}\int_{z_{m+(j-1)-d}}^{\frac{y-\sum_{i=1}^{j-1}z_{m+i-d}}{d-j+1}}f_{Z^{(N)}_{(m+1-d)}, \dots, Z^{(N)}_{(m)}, \textup{ main}}(z_{m+1-d},\dots, z_m)dz_m\cdots dz_{m+1-d}
\end{align}
is the CDF of $\sum_{i=1}^d W_{(m+i-d)}$. We know that for a cumulative density function $F(y)$, $F(-\infty)=0$ and $F(\infty)=1$.
Thus, substituting these two estimates into \eqref{main part of main term of probability of normalized log max volume between a and b mod 1 final estimate} yields
\begin{align}
\mathcal{M}_{N, \textup{ main}}(a,b) \ = \ (b-a)+o(1).
\end{align}
Returning to \eqref{desired estimate for probability of normalized log max volume between a and b mod 1} and \eqref{desired estimate for main term of probability of normalized log max volume between a and b mod 1}, combined with the estimate for the error term $\mathcal{E}_N(a,b)$ in \eqref{error term of probability of normalized log max volume between a and b mod 1 final estimate} and the error term $\mathcal{M}_{\textup{err},N}(a,b)$ of $\mathcal{M}_N(a,b)$ in \eqref{error part of main term of probability of normalized log max volume between a and b mod 1 final estimate}, we have
\begin{align}
F_N(a,b) &\ = \ \mathcal{M}_N(a,b)+\mathcal{E}_N(a,b) \nonumber\\
&\ = \ \mathcal{M}_{N, \textup{ main}}(a,b)+\mathcal{M}_{N, \textup{ error}}(a,b)+\mathcal{E}_N(a,b) \nonumber\\
&\ = \ (b-a)+o(1)+O(N^{-\delta'})+O(N^{-\delta'}) \nonumber\\
&\ = \ (b-a)+o(1).
\end{align}
We conclude that $\log_B(\mathfrak{m}_d^{(N)})$ converges to being equidistributed mod 1, and therefore by Uniform Distribution Characterization $\mathfrak{m}_d^{(N)}$ converges to strong Benford behavior.
\qed

\section{Funding}
This work was partially supported by Williams College Summer Science Program Research Fellowship, the Finnerty Fund, and NSF Grant DMS2241623. We would like to thank the anonymous referee for their valuable time and constructive comments.

\appendix
\section{Proof of Proposition \ref{binomial difference bound}}\label{appendix binomial difference bound} 
In this section, we prove Proposition \ref{binomial difference bound}, which provides a quantitative bound on the difference among probabilities within an interval. Let us first recall the statement of Proposition \ref{binomial difference bound}.
\begin{prop: binomial difference bound}
For $\ell\leq \sqrt{k(N)}/2$, 
\begin{align}
\left|\binom{k(N)}{k_{1,\ell}}-\binom{k(N)}{k_{1,\ell+1}}\right| \ \leq \  O\left(\binom{k(N)}{k_{1,\ell}}\cdot N^{-\frac{3\epsilon}{10}}\right).
\end{align}
\end{prop: binomial difference bound}

\begin{proof}
We first factor out $\binom{k(N)}{k_{1,\ell}}$ from the difference:
\begin{align}\label{binomial difference}
&\binom{k(N)}{k_{1,\ell}}-\binom{k(N)}{k_{1,\ell+1}} \ = \ \binom{k(N)}{\frac{k(N)}{2}+\ell N^\delta}-\binom{k(N)}{\frac{k(N)}{2}+(\ell+1)N^\delta} \nonumber\\
&\ = \ \frac{k(N)!}{\left(\frac{k(N)}{2}+\ell N^\delta\right)!\left(\frac{k(N)}{2}-\ell N^\delta\right)!}-\frac{k(N)!}{\left(\frac{k(N)}{2}+(\ell+1)N^\delta\right)!\left(\frac{k(N)}{2}-(\ell+1)N^\delta\right)} \nonumber \\
&\ = \ \frac{k(N)!}{\left(\frac{k(N)}{2}+\ell N^\delta\right)!\left(\frac{k(N)}{2}-\ell N^\delta\right)!} \nonumber\\
&\cdot \frac{\left(\frac{k(N)}{2}+(\ell+1)N^\delta\right)!\left(\frac{k(N)}{2}-(\ell+1)N^\delta\right)!-\left(\frac{k(N)}{2}+\ell N^\delta\right)!\left(\frac{k(N)}{2}-\ell N^\delta\right)!}{\left(\frac{k(N)}{2}+(\ell+1)N^\delta\right)!\left(\frac{k(N)}{2}-(\ell+1)N^\delta\right)!} \nonumber\\
&\ = \ \binom{k(N)}{k_{1,\ell}}\left(1-\frac{\left(\frac{k(N)}{2}+\ell N^\delta\right)!\left(\frac{k(N)}{2}-\ell N^\delta\right)!}{\left(\frac{k(N)}{2}+(\ell+1)N^\delta\right)!\left(\frac{k(N)}{2}-(\ell+1)N^\delta\right)!}\right).
\end{align}
Now, we analyze the term
\begin{align}
\alpha_{\ell,N} \ := \ \frac{\left(\frac{k(N)}{2}+\ell N^\delta\right)!\left(\frac{k(N)}{2}-\ell N^\delta\right)!}{\left(\frac{k(N)}{2}+(\ell+1)N^\delta\right)!\left(\frac{k(N)}{2}-(\ell+1)N^\delta\right)!}.
\end{align} 
We want to show that $\alpha_{\ell,N}\rightarrow 1$ as $N\rightarrow\infty$, so that the difference in \eqref{binomial difference} is asymptotically much smaller than the main term $\binom{k(N)}{k_{1,\ell}}$. We have
\begin{align}\label{upper and lower bound on alpha}
\frac{\left(\frac{k(N)}{2}-(\ell+1)N^\delta\right)^{N^\delta}}{\left(\frac{k(N)}{2}+(\ell+1)N^\delta\right)^{N^\delta}} &\ \leq \ \alpha_{\ell,N} \ \leq \ \frac{\left(\frac{k(N)}{2}-\ell N^\delta\right)^{N^\delta}}{\left(\frac{k(N)}{2}+\ell N^\delta\right)^{N^\delta}} \nonumber\\
\left(1-\frac{4(\ell+1)N^\delta}{k(N)+2(\ell+1)N^\delta}\right)^{N^\delta} &\ \leq \ \alpha_{\ell, N} \ \leq \left(1-\frac{4\ell N^\delta}{k(N)+2\ell N^\delta}\right)^{N^\delta}.
\end{align}
Since $\ell \leq \sqrt{k(N)}/2$, then
\begin{align}
0 \ \leq \ \frac{4\ell N^\delta}{k(N)+2\ell N^\delta} \ \leq \ \frac{4\ell N^\delta}{k(N)} \ \leq \ \frac{2N^{\delta}}{\sqrt{k(N)}}.
\end{align}
Since $k(N)\geq N^\epsilon$, and $\delta\in (0,\epsilon/10)$, then
\begin{align}
0 \ \leq \ \frac{2N^{2\delta}}{\sqrt{k(N)}} &\ \leq \ \frac{2N^{\epsilon/10}}{N^{\epsilon/2}} \ = \ 2N^{-2\epsilon/5} \ = \ O(N^{-2\epsilon/5}).
\end{align}
Similarly, we also have
\begin{align}
\frac{4(\ell+1)N^\delta}{k(N)+2(\ell+1)N^\delta} \ = \ O(N^{-2\epsilon/5}).
\end{align}
Hence, for sufficiently large $N$,
\begin{align}\label{bound between zero and one}
0 & \ < \  1-\frac{4(\ell+1)N^\delta}{k(N)+2(\ell+1)N^\delta} \ < \ 1 \nonumber\\
0 &\ < \ 1-\frac{4\ell N^\delta}{k(N)+2\ell N^\delta} \ < \ 1.
\end{align}
Returning to \eqref{upper and lower bound on alpha}, given \eqref{bound between zero and one}, we have
\begin{align}
\left(1-\frac{4(\ell+1) N^\delta}{k(N)+2(\ell+1)N^\delta}\right)^{N^\delta} \ \leq \ \alpha_{\ell, N} \ \leq \ \left(1-\frac{4\ell N^\delta}{k(N)+2\ell N^\delta}\right)^{N^\delta}.
\end{align}
Using binomial expansion,
\begin{align}
\sum_{j=1}^{N^\delta}\binom{N^\delta}{j}(-1)^j \left(\frac{4(\ell+1)N^\delta}{k(N)+2(\ell+1)N^\delta}\right)^j&\ \leq \ \alpha_{\ell, N}-1 \ \leq  \ \sum_{j=1}^{N^\delta}\binom{N^\delta}{j}(-1)^j\left(\frac{4\ell N^\delta}{k(N)+2\ell N^\delta}\right)^j.
\end{align}
We first bound the right sum in \eqref{upper and lower bound on alpha}. Using the assumption that $\ell \leq \sqrt{k(N)}/2$, we have
\begin{align}
\left|\sum_{j=1}^{N^\delta}\binom{N^\delta}{j}(-1)^j \left(\frac{4\ell N^\delta}{k(N)+2\ell N^\delta}\right)^j\right| &\ \leq \ \sum_{j=1}^{N^\delta}N^{j\delta}\left(\frac{4\ell N^\delta}{k(N)}\right)^j \nonumber\\
&\ \leq \ \sum_{j=1}^{N^\delta}N^{j\delta}\left(\frac{2\sqrt{k(N)}N^\delta}{k(N)}\right)^j \nonumber\\
&\ = \ \sum_{j=1}^{N^\delta}\left(\frac{2N^{2\delta}}{\sqrt{k(N)}}\right)^j \nonumber\\
&\ = \ \frac{2N^{2\delta}}{\sqrt{k(N)}}\cdot \frac{1-\left(\frac{2N^{2\delta}}{\sqrt{k(N)}}\right)^{N^{\delta}}}{1-\frac{2N^{2\delta}}{\sqrt{k(N)}}},
\end{align}
where on the last line we use the geometric series formula. Since $k(N)\geq N^\epsilon$, and $\delta\in (0,\epsilon/10)$, then
\begin{align}
0 \ \leq \ \frac{2N^{2\delta}}{\sqrt{k(N)}} &\ \leq \ \frac{2N^{\epsilon/5}}{N^{\epsilon/2}} \ = \ 2N^{-3\epsilon/10}.
\end{align}
Hence
\begin{align}\label{right sum}
\left|\sum_{j=1}^{N^\delta}\binom{N^\delta}{j}(-1)^j \left(\frac{4\ell N^\delta}{k(N)+2\ell N^\delta}\right)^j\right| &\ \leq \ 2N^{-3\epsilon/10}\cdot \frac{1}{1-2N^{-3\epsilon/10}} \nonumber\\
&\ = \ O\left(N^{-3\epsilon/10}\right).
\end{align}
Similarly, for the left sum in \eqref{upper and lower bound on alpha}, we also have
\begin{align}\label{left sum}
\left|\sum_{j=1}^{N^\delta}\binom{N^\delta}{j}(-1)^j \left(\frac{4(\ell+1) N^\delta}{k(N)+2(\ell+1) N^\delta}\right)^j\right| \ = \ O\left(N^{-3\epsilon/10}\right).
\end{align}
Applying \eqref{right sum} and \eqref{left sum} to \eqref{upper and lower bound on alpha}, we get
\begin{align}\label{alpha final estimate}
\left|\alpha_{\ell,N}-1\right| \ = \ O\left(N^{-3\epsilon/10}\right).
\end{align}
Thus, substituting the estimate \eqref{alpha final estimate} back to \eqref{binomial difference} gives us
\begin{align}
\left|\binom{k(N)}{k_{1,\ell}}-\binom{k(N)}{k_{1,\ell+1}}\right| \ \leq \   O\left(\binom{k(N)}{k_{1,\ell}}\cdot N^{-3\epsilon/10}\right).
\end{align}
\end{proof}

\section{Case for $k_S<d$}\label{appendix kS less than d}
In this appendix, we want to show that $D_1(y)\ll N^{-1/2-\delta}$ when $k_S<d$. Recall that $S$ is a proper subset of $[d]$ and $k_S$ is the largest index such that $k\in [d]\setminus S$, and that $D_1(y)$ is defined in \eqref{first term of error term of PDF of normalized log max volume} to be
\begin{align}\label{reiterate first term of error term of PDF of normalized log max volume}
D_1(y) &\ := \ \sum_{S\subsetneq [d]}\int_{-N^{\delta'}}^{\frac{y}{d}}\prod_{j=2}^{d-1}\int_{z_{m+(j-1)-d}}^{\frac{y-\sum_{i=1}^{j-1}z_{m+i-d}}{d-j+1}}\left(\prod_{i\in S}\varphi(z_{m+i-d})\right) \nonumber\\
&\quad\quad\cdot\left(\prod_{i\in [d]\setminus S}A(z_{m+i-d})\right)\Bigg|_{z_m=y-\sum_{i=1}^{d-1}z_{m+i-d}}dz_{m-1}\cdots dz_{m+1-d}.
\end{align}
Let $[\ell_1;\ell_2]$ denote $\{\ell_1,\dots, \ell_2\}$ if $\ell_1,\ell_2$ are integers such that $\ell_1\leq \ell_2$, and let it be $\emptyset$ otherwise. Then the integrand of \eqref{first term of error term of PDF of normalized log max volume} becomes
\begin{align}
&\left(\prod_{i\in S}\varphi(z_{m+i-d})\right)\left(\prod_{i\in [d]\setminus S}A(z_{m+i-d})\right)\Bigg|_{z_m=y-\sum_{i=1}^{d-1}z_{m+i-d}} \nonumber\\
&\quad \ = \ \left(\prod_{i\in S\setminus [k_S+1;d]}\varphi(z_{m+i-d})\right)\left(\prod_{i\in [d]\setminus (S\cup \{k_S\})}A(z_{m+i-d})\right) \cdot A(z_{m+k_S-d}) \nonumber \\
&\quad\quad\quad\cdot \left(\prod_{i\in [k_S+1;d-1]}\varphi(z_{m+i-d})\right)\varphi\left(y-\sum_{i=1}^{d-1}z_{m+i-d}\right).
\end{align}
Hence,
\begin{align}\label{each term in the sum of first term of error term of PDF of normalized log max volume}
&\int_{-N^{\delta'}}^{\frac{y}{d}}\prod_{j=2}^{d-1}\int_{z_{m+(j-1)-d}}^{\frac{y-\sum_{i=1}^{j-1}z_{m+i-d}}{d-j+1}}\left(\prod_{i\in S}\varphi(z_{m+i-d})\right)\left(\prod_{i\in [d]\setminus S}A(z_{m+i-d})\right)\Bigg|_{z_m=y-\sum_{i=1}^{d-1}z_{m+i-d}} \nonumber\\
&dz_{m-1}\cdots dz_{m+1-d} \nonumber\\
&\quad \ = \ \int_{-N^{\delta'}}^{\frac{y}{d}}\prod_{j=2}^{d-1}\int_{z_{m+(j-1)-d}}^{\frac{y-\sum_{i=1}^{j-1}z_{m+i-d}}{d-j+1}}
\left(\prod_{i\in S\setminus [k_S+1;d]}\varphi(z_{m+i-d})\right)\left(\prod_{i\in [d]\setminus (S\cup \{k_S\})}A(z_{m+i-d})\right) \nonumber\\
&\quad\quad\quad \cdot A(z_{m+k_S-d})\cdot \left(\prod_{i=k_S+1}^{d-1}\varphi(z_{m+i-d})\right) \cdot \varphi\left(y-\sum_{i=1}^{d-1}z_{m+i-d}\right)dz_{m-1}\cdots dz_{m+1-d}.
\end{align}
To bound \eqref{each term in the sum of first term of error term of PDF of normalized log max volume}, we first give an estimate on the following
\begin{align}
&\prod_{j=k_S+1}^{d-1}\int_{z_{m+(j-1)-d}}^{\frac{y-\sum_{i=1}^{j-1}z_{m+i-d}}{d-j+1}}\left(\prod_{i=k_S+1}^{d-1}\varphi(z_{m+i-d})\right)\cdot \varphi\left(y-\sum_{i=1}^{d-1}z_{m+i-d}\right)dz_{m-1}\cdots dz_{m+(k_S+1)-d} \nonumber\\
&\quad \ \ll \ \underbrace{\int_{-\infty}^\infty \cdots \int_{-\infty}^\infty}_{d-k_S-1} \left(\prod_{i=k_S+1}^{d-1}\varphi(z_{m+i-d})\right)\cdot \varphi\left(\left(y-\sum_{i=1}^{k_S}z_{m+i-d}\right)-\sum_{i=k_S+1}^{d-1}z_{m+i-d}\right)dz_{m-1} \nonumber\\
&\quad\quad\quad \cdots dz_{m+(k_S+1)-d} \nonumber\\
&\quad \ \ll \ \frac{1}{\sqrt{2\pi(d-k_S)}}e^{-\frac{\left(y-\sum_{i=1}^{k_S}z_{m+i-d}\right)^2}{2(d-k_S)}},
\end{align}
where in the last line we use the fact that the second last line is exactly the convolution of $d-k_S$ standard Gaussian density function evaluated at $y-\sum_{i=1}^{k_S}z_{m+i-d}$, which is exactly the probability density function of sum of $d-k_S$ independent standard Gaussian random variables and thus is itself also Gaussian with mean 0 and variance $d-k_S$, evaluated at $y-\sum_{i=1}^{k_S}z_{m+i-d}$. Hence, \eqref{each term in the sum of first term of error term of PDF of normalized log max volume} is bounded above by
\begin{align}\label{first term of error term of PDF of normalized log max volume case II estimate}
&\int_{-N^{\delta'}}^{\frac{y}{d}}\prod_{j=2}^{k_S-1}\int_{z_{m}+(j-1)-d}^{\frac{y-\sum_{i=1}^{j-1}z_{m+i-d}}{d-j+1}}\left(\prod_{i\in S\setminus [k_S+1;d]}\varphi(z_{m+i-d})\right)\left(\prod_{i\in [d]\setminus (S\cup \{k_S\})}A(z_{m+i-d})\right) \nonumber\\
&\quad \quad \int_{z_m+(k_S-1)-d}^{\frac{y-\sum_{i=1}^{k_S-1}z_{m+i-d}}{d-k_S+1}}A(z_{m+k_S-d})\cdot \frac{1}{\sqrt{2\pi(d-k_S)}}e^{-\frac{\left(y-\sum_{i=1}^{k_S}z_{m+i-d}\right)^2}{2(d-k_S)}}dz_{m+k_S-d}\cdots dz_{m+1-d} \nonumber\\
&\quad \ \ll \ N^{-1/2-\delta}\underbrace{\int_{-CN^{\delta'}}^{CN^{\delta'}}\cdots \int_{-CN^{\delta'}}^{CN^{\delta'}}}_{k_S-1}\left(\prod_{i\in S\setminus [k_S+1;d]}\varphi(z_{m+i-d})\right)\left(\prod_{i\in [d]\setminus (S\cup \{k_S\})}A(z_{m+i-d})\right) \nonumber\\
&\quad \quad \quad dz_{m+(k_S-1)-d}\cdots dz_{m+1-d} \nonumber \\
&\quad \ \ll \ N^{-1/2-\delta}\prod_{i\in S\setminus [k_S+1;d]}\left(\int_{-CN^{\delta'}}^{CN^{\delta'}}\varphi(z_{m+i-d})dz_{m+i-d}\right) \nonumber\\
&\quad\quad\quad\cdot \prod_{i\in [d]\setminus (S\cup \{k_S\})}\left(\int_{-CN^{\delta'}}^{CN^{\delta'}}A(z_{m+i-d})dz_{m+i-d}\right) \nonumber\\
&\quad \ \ll \ N^{-1/2-\delta+\left|[d]\setminus (S\cup \{k_S\})\right|(-1/2-\delta+\delta')} \ \ll \ N^{-1/2-\delta},
\end{align}
where in the second line, we use the change of variable $z_{m+k_S-d}\mapsto z_{m+k_S-d}-(\sum_{i=1}^{k_S-1}z_{m+i-d}+y)$ and the fact that
\begin{align}
\int_{-\infty}^\infty \frac{1}{\sqrt{2\pi(d-k_S)}}e^{-\frac{(z_{m+k_S-d})^2}{2(d-k_S)}}dz_{m+k_S-d} \ = \ 1.
\end{align}
Since there is only a finite number of proper subsets $S$ of $[d]$, then when $k_S<d$, we have that $D_1(y)\ll N^{-1/2-\delta}$ by definition of $D_1(y)$ in \eqref{reiterate first term of error term of PDF of normalized log max volume}.

\section{Case for $d>2$}\label{appendix the case for d greater than 2}
In this section, we want to obtain the following estimate on $dI_N(y)/dy$ when $d>2$
\begin{align}
&\frac{d}{dy}I_N(y)\nonumber\\
&\quad \ \ll \  \underbrace{\int_{-\infty}^{\infty}\cdots \int_{-\infty}^\infty}_{d-2}\left(\prod_{i=1}^{d-2}\varphi(z_{m+i-d})\right)\varphi\left(\frac{y-\sum_{i=1}^{d-2}z_{m+i-d}}{2}\right) dz_{m-2}\cdots dz_{m+1-d} \nonumber\\
&\quad\quad\quad+ \underbrace{\int_{-\infty}^{\infty}\cdots \int_{-\infty}^\infty}_{d-2}\left(\prod_{i=1}^{d-2}\varphi(z_{m+i-d})\right)\varphi\left(y-\left(\sum_{i=1}^{d-2}z_{m+i-d}\right)-z_{m-2}\right)dz_{m-2}\cdots dz_{m+1-d}.
\end{align}
First, recall from \eqref{derivative of main part of main term of PDF of normalized log max volume high dimension} that
\begin{align}
&\frac{d}{dy}I_N(y) \nonumber\\
&\quad \ \ll \ \int_{-N^{\delta'}}^{\frac{y}{d}}\prod_{j=2}^{d-2}\int_{z_{m+(j-1)-d}}^{\frac{y-\sum_{i=1}^{j-1}z_{m+i-d}}{d-j+1}}\left(\Phi(z_{m+1-d})\right)^{m-d}\left(\prod_{i=1}^{d-2}\varphi(z_{m+i-d})\right)\left(\varphi\left(\frac{y-\sum_{i=1}^{d-2}z_{m+i-d}}{2}\right)\right)^2 \nonumber\\
&\quad\quad\quad+ \Bigg(\int_{z_{m-2}}^{\frac{y-\sum_{i=1}^{d-2}z_{m+i-d}}{2}} (\Phi(z_{m+1-d}))^{m-d} \left(\prod_{i=1}^{d-1}\varphi(z_{m+i-d})\right)  (-1) \left(y-\sum_{i=1}^{d-1}z_{m+i-d}\right)\nonumber\\
&\quad\quad\quad\cdot \varphi\left(y-\sum_{i=1}^{d-1}z_{m+i-d}\right) dz_{m-1}\Bigg)dz_{m-2}\cdots dz_{m+1-d}.
\end{align}
Since $y$ is an upper bound on $\sum_{i=1}^d z_{m+i-d}$ and $z_{m-2}\leq z_{m-1}\leq z_{m}$, then we have $z_{m-2} \leq z_{m-1}\leq (z_{m-1}+z_{m})/2 \leq (y-\sum_{i=1}^{d-2}z_{m+i-d})/2$. When $(y-\sum_{i=1}^{d-2}z_{m+i-d})/2\leq 0$, $\varphi(z_{m-1})$ is at most $\varphi((y-\sum_{i=1}^{d-2}z_{m+i-d})/2)$. Hence,
\begin{align}
&\int_{z_{m-2}}^{\frac{y-\sum_{i=1}^{d-2}z_{m+i-d}}{2}}(\Phi(z_{m+1-d}))^{m-d}\left(\prod_{i=1}^{d-1}\varphi(z_{m+i-d})\right)(-1)\left(y-\sum_{i=1}^{d-1}z_{m+i-d}\right) \nonumber\\
&\cdot\varphi\left(y-\sum_{i=1}^{d-1}z_{m+i-d}\right)dz_{m-1} \nonumber\\
&\quad \ \ll \ \left(\prod_{i=1}^{d-2}\varphi(z_{m+i-d})\right)\varphi\left(\frac{y-\sum_{i=1}^{d-2}z_{m+i-d}}{2}\right)\int_{-\infty}^\infty \left(y-\sum_{i=1}^{d-1}z_{m+i-d}\right)\nonumber\\
&\quad\quad\quad\cdot \varphi\left(y-\sum_{i=1}^{d-1}z_{m+i-d}\right)dz_{m-1} \nonumber\\
&\quad \ll \ \left(\prod_{i=1}^{d-2}\varphi(z_{m+i-d})\right)\varphi\left(\frac{y-\sum_{i=1}^{d-2}z_{m+i-d}}{2}\right)\int_{-\infty}^\infty |z_{m-1}|\varphi(z_{m-1})dz_{m-1},
\end{align}
where in the last line we use the change of variable $z_{m-1}\mapsto -z_{m-1}+y-\sum_{i=1}^{d-2}z_{m+i-d}$. The integral $\int_{-\infty}^\infty |x|\varphi(x)dx$ is the expected value of the absolute value of a standard Gaussian random variable, which is finite. Hence, when $(y-\sum_{i=1}^{d-2}z_{m+i-d})/2\leq 0$,
\begin{align}\label{estimate I for y leq 0}
&\int_{z_{m-2}}^{\frac{y-\sum_{i=1}^{d-2}z_{m+i-d}}{2}}(\Phi(z_{m+1-d}))^{m-d}\left(\prod_{i=1}^{d-1}\varphi(z_{m+i-d})\right)(-1)\left(y-\sum_{i=1}^{d-1}z_{m+i-d}\right) \nonumber\\
&\cdot \varphi\left(y-\sum_{i=1}^{d-1}z_{m+i-d}\right)dz_{m-1} \nonumber\\
&\quad \ \ll \ \left(\prod_{i=1}^{d-2}\varphi(z_{m+i-d})\right)\varphi\left(\frac{y-\sum_{i=1}^{d-2}z_{m+i-d}}{2}\right).
\end{align}
On the other hand, when $(y-\sum_{i=1}^{d-2}z_{m+i-d})/2\geq 0$,
\begin{align}
y-\sum_{i=1}^{d-1}z_{m+i-d} \ \geq \ y-\sum_{i=1}^{d-2}z_{m+i-d}-\left(\frac{y-\sum_{i=1}^{d-2}z_{m+i-d}}{2}\right) \ = \frac{y-\sum_{i=1}^{d-2}z_{m+i-d}}{2} \ \geq \ 0.
\end{align}
Hence, we find
\begin{align}\label{estimate II for y geq 0}
&\int_{z_{m-2}}^{\frac{y-\sum_{i=1}^{d-2}z_{m+i-d}}{2}}(\Phi(z_{m+1-d}))^{m-d}\left(\prod_{i=1}^{d-1}\varphi(z_{m+i-d})\right)(-1)\left(y-\sum_{i=1}^{d-1}z_{m+i-d}\right) \nonumber\\
&\cdot\varphi\left(y-\sum_{i=1}^{d-1}z_{m+i-d}\right)dz_{m-1} \nonumber\\
&\quad\ \ll \ \left(\prod_{i=1}^{d-2}\varphi(z_{m+i-d})\right)\int_{z_{m-2}}^{\frac{y-\sum_{i=1}^{d-2}z_{m+i-d}}{2}}\left(y-\sum_{i=1}^{d-1}z_{m+i-d}\right)\varphi\left(y-\sum_{i=1}^{d-1}z_{m+i-d}\right)dz_{m-1} \nonumber \\
&\quad\ = \ \left(\prod_{i=1}^{d-2}\varphi(z_{m+i-d})\right)\varphi\left(y-\sum_{i=1}^{d-1}z_{m+i-d}\right)\Bigg|_{z_{m-1}=z_{m-2}}^{z_{m-1}=\frac{y-\sum_{i=1}^{d-2}z_{m+i-d}}{2}} \nonumber\\
&\quad\ = \ \left(\prod_{i=1}^{d-2}\varphi(z_{m+i-d})\right)\left(\varphi\left(\frac{y-\sum_{i=1}^{d-2}z_{m+i-d}}{2}\right)-\varphi\left(y-\left(\sum_{i=1}^{d-2}z_{m+i-d}\right)-z_{m-2}\right)\right).
\end{align}

Thus, if $d>2$, regardless of whether $(y-\sum_{i=1}^{d-2}z_{m+i-d})/2\leq 0$ or $(y-\sum_{i=1}^{d-2}z_{m+i-d})/2\geq 0$, we have by combining \eqref{estimate I for y leq 0} and \eqref{estimate II for y geq 0} that
\begin{align}
&\int_{z_{m-2}}^{\frac{y-\sum_{i=1}^{d-2}z_{m+i-d}}{2}}(\Phi(z_{m+1-d}))^{m-d}\left(\prod_{i=1}^{d-1}\varphi(z_{m+i-d})\right)(-1)\left(y-\sum_{i=1}^{d-1}z_{m+i-d}\right) \nonumber\\
&\quad \cdot \varphi\left(y-\sum_{i=1}^{d-1}z_{m+i-d}\right)dz_{m-1} \nonumber\\
&\quad \ \ll \ \left(\prod_{i=1}^{d-2}\varphi(z_{m+i-d})\right)\left(\varphi\left(\frac{y-\sum_{i=1}^{d-2}z_{m+i-d}}{2}\right)+\varphi\left(y-\left(\sum_{i=1}^{d-2}z_{m+i-d}\right)-z_{m-2}\right)\right). 
\end{align}
Hence, \eqref{derivative of main part of main term of PDF of normalized log max volume high dimension} becomes
\begin{align}
&\frac{d}{dy}I_N(y) \nonumber\\
&\quad \ \ll \ \int_{-N^{\delta'}}^{\frac{y}{d}}\prod_{j=2}^{d-2}\int_{z_{m+(j-1)-d}}^{\frac{y-\sum_{i=1}^{j-1}z_{m+i-d}}{d-j+1}}\left(\prod_{i=1}^{d-2}\varphi(z_{m+i-d})\right)\left(\varphi\left(\frac{y-\sum_{i=1}^{d-2}z_{m+i-d}}{2}\right)\right)^2 \nonumber\\
&\quad\quad\quad + \left(\prod_{i=1}^{d-2}\varphi(z_{m+i-d})\right)\left(\varphi\left(\frac{y-\sum_{i=1}^{d-2}z_{m+i-d}}{2}\right)+\varphi\left(y-\left(\sum_{i=1}^{d-2}z_{m+i-d}\right)-z_{m-2}\right)\right) \nonumber\\
&\quad\quad\quad dz_{m-2}\cdots dz_{m+1-d} \nonumber\\
&\quad \ \ll \  \underbrace{\int_{-\infty}^{\infty}\cdots \int_{-\infty}^\infty}_{d-2}\left(\prod_{i=1}^{d-2}\varphi(z_{m+i-d})\right)\varphi\left(\frac{y-\sum_{i=1}^{d-2}z_{m+i-d}}{2}\right) dz_{m-2}\cdots dz_{m+1-d} \nonumber\\
&\quad\quad+ \underbrace{\int_{-\infty}^{\infty}\cdots \int_{-\infty}^\infty}_{d-2}\left(\prod_{i=1}^{d-2}\varphi(z_{m+i-d})\right)\varphi\left(y-\left(\sum_{i=1}^{d-2}z_{m+i-d}\right)-z_{m-2}\right)dz_{m-2}\cdots dz_{m+1-d},
\end{align}
which is exactly the estimate we need.

\end{document}